\newcounter{stepnb}
\tikzstyle{nodo}=[circle,draw,fill,inner sep=0pt,minimum size=%
\tikzstyle{infinito}=[circle,inner sep=0pt,minimum size=0mm]
\newtheorem{theorem}{Theorem}[section]
\newtheorem{lemma}[theorem]{Lemma}
\newtheorem{proposition}[theorem]{Proposition}
\newtheorem{corol}[theorem]{Corollary}
\newtheorem{remark}[theorem]{Remark}
\numberwithin{equation}{section}
\newcommand{\R}{\mathbb{R}}
\DeclareMathOperator*{\essinf}{ess\,inf}
\DeclareMathOperator*{\essup}{ess\,sup}
\newcommand{\ee}{\varepsilon}
\newcommand{\be}{\begin{equation}}
\newcommand{\eq}{\end{equation}}
\newcommand{\weaks}{\stackrel{*}{\rightharpoonup}}
\newcommand{\loc}{\mathrm{loc}}
\begin{document}
\title[]{Nonlocal Generalized Aw-Rascle-Zhang model: well-posedness and singular limit}

\author[E. Marconi]{Elio Marconi}
\address{E.M. Dipartimento di Matematica ``Tullio Levi Civita", Universit\`a degli Studi di Padova, Via
Trieste 63, 35131 Padova, Italy}
\email{elio.marconi@unipd.it}
\author[L.~V.~Spinolo]{Laura V.~Spinolo}
\address{L.V.S. CNR-IMATI ``E. Magenes", via Ferrata 5, I-27100 Pavia, Italy.}
\email{spinolo@imati.cnr.it}
\maketitle
{
\rightskip .85 cm
\leftskip .85 cm
\parindent 0 pt
\begin{footnotesize}
We discuss a nonlocal version of the Generalized Aw-Rascle-Zhang model, a second-order vehicular traffic model where the empty road velocity is a Lagrangian marker governed by a transport equation. The evolution of the car density is described by a continuity equation where the drivers' velocity depends on both the empty road velocity and the convolution of the car density with an anisotropic kernel.  We establish existence and uniqueness results. When the convolution kernel is replaced by a Dirac Delta, the nonlocal model formally boils down to the classical (local) Generalized Aw-Rascle-Zhang model, which consists of a conservation law coupled with a transport equation. In the case of exponential kernels, we establish convergence in the nonlocal-to-local limit by proving an Oleinik-type estimate for the convolution term. To the best of our knowledge, this is the first nonlocal-to-local limit result for a system of two non-decoupling equations with a nonlocal flux function.

\medskip

\noindent
{\sc Keywords:} Aw-Rascle-Zhang model, nonlocal-to-local limit, nonlocal conservation laws, singular local limit, traffic models.

\medskip\noindent
{\sc MSC (2020):  35L65}

\end{footnotesize}
}

\section{Introduction}
We consider the system 
\be \label{e:nlGARZ}
\left\{
\begin{array}{ll}
        \partial_t \rho + \partial_x [V(\xi, u) \rho] =0 \\
        \partial_t u + V(\xi, u) \partial_x u = 0, \\
\end{array} 
\right.  \quad  \xi (t, x) = \int_x^{+ \infty} \rho(t, y) \eta(x-y) dy. 
\eq
In the above expression, the unknowns are $\rho: \R_+ \times \R \to \R$ and $u: \R_+ \times \R \to \R$, whereas the velocity field $V: \R^2 \to \R$ and the the convolution kernel $\eta: \R \to \R$ are given functions. The above system is a nonlocal version of the so-called Generalized Aw-Rascle-Zhang vehicular model introduced by Fan, Herty, and Seibold in~\cite{FanHertySeibold}, namely 
\be \label{e:GARZ}
\left\{
\begin{array}{ll}
        \partial_t \rho + \partial_x [V(\rho, u) \rho] =0 \\
        \partial_t u + V(\rho, u) \partial_x u = 0. \\
\end{array}
\right.
\eq
In both~\eqref{e:nlGARZ} and ~\eqref{e:GARZ}, $\rho$ represents the car density and $V$ their velocity, whereas the Lagrangian marker $u$ represents the empty road velocity, that is, the velocity each driver would choose if the road were completely free. This is an individual feature of each driver, and, as such, it is transported by the flow as dictated by the second equation in~\eqref{e:nlGARZ} and~\eqref{e:GARZ}. The first equation in both systems expresses the conservation of the total amount of cars. We point out in passing that the second equation of both~\eqref{e:nlGARZ} and~\eqref{e:GARZ} is in principle highly ill-posed in low regularity regimes of $\rho$ and $u$ as the product $V(\rho, u) \partial_x u$ is not well-defined: we will come back to this point in the following. 

As in~\cite{FanHertySeibold}, we impose on $V$ the following assumptions, which are fairly reasonable in view of modeling considerations\footnote{{\color{black}To simplify the notation, in~\eqref{e:V} we assume that all the conditions are satisfies on the whole $\R^2$, but since as a matter of fact in the following we will show that the solutions $(\xi,u)$ of \eqref{e:nlGARZ} and $(\rho,u)$ of \eqref{e:GARZ} are always confined in the rectangle $[0, 1] \times [\essinf u_0, \essup u_0]$, it suffices that the conditions are satisfied in that range.}}: 
   \be \label{e:V}
         V\in C^2 (\R^2; \R), \quad V \ge 0, \quad 
    \partial_1 V \leq 0, \quad  \partial_2 V \ge 0, \quad V(1, w) =0 \; \text{for every $w$},
\eq 
where $\partial_1V$ and $\partial_2V$ denote the partial derivatives of $V$ with respect to $\rho$ and $u$, respectively. In the above equation and in the rest of the paper, we normalize to $1$ the maximal possible car density, that is the density at which all drivers stop. 

In~\eqref{e:nlGARZ}, the presence of the convolution term takes into account the fact that traffic agents tune their velocity based on the averaged density in a neighborhood of their position, rather than on its pointwise value only as in~\eqref{e:nlGARZ}. As it is by now fairly standard in the analysis of nonlocal traffic models (see for instance~\cite{BlandinGoatin,Chiarello}), we assume that 
\be 
\label{e:eta}
   \eta \in L^1 \cap L^\infty(\R; \R_+), \quad \int_{\R} \eta (x) dx  = 1, \quad \mathrm{supp} \,\eta \subseteq \R_-, \quad \eta \,   \text{monotone non-decreasing on $\R_-$}. 
\eq 
The third assumption in the above equation is a ``look-ahead-only" condition modeling the fact that drivers are affected by the downstream traffic density only. The last condition in~\eqref{e:eta} models the fact that drives pay more attention to closer vehicles rather than those that are further away. In the following, we focus on the Cauchy problem posed by prescribing the initial data
\be \label{e:idpose}
     \rho(0, \cdot) = \rho_0, \qquad u(0, \cdot) = u_0,
\eq
which in view of modeling considerations we assume satisfy 
\begin{equation} \label{e:id}
0 \leq \rho_0 \leq 1, \quad u_0 \in L^\infty (\R), \; u_0 \ge 0. 
\end{equation}
In the present paper we establish well-posedness of~\eqref{e:nlGARZ},\eqref{e:idpose} in a suitable class of solutions. We also show that, under suitable assumptions, the solution of~\eqref{e:nlGARZ} approaches the solution of~\eqref{e:GARZ} when $\eta$ converges to the Dirac Delta.  Well-posedness results for~\eqref{e:GARZ},\eqref{e:idpose} are established in the companion note~\cite{MS:localGARZ}.

The archetypical fluid-dynamic traffic model is the so-called LWR (Lighthill, Whitham, and Richards) model introduced in \cite{LW,R}, namely  
\be \label{e:LWR}
\partial_t \rho + \partial_x [V(\rho) \rho] =0.
\eq 
Despite its numerous advantages and its wide use in the engineering literature, the LWR model is affected by some shortcomings. In particular, the model postulates that every driver reacts in the same way to the traffic conditions, which is not always the case in real-world applications. To overcome these drawbacks, various works introduced higher-order models, that is, models consisting of several equations, see in particular~\cite{AwRascle,Daganzo,Payne,Whitham,Zhang} and also~\cite{GaravelloHanPiccoli} for an extended overview. In particular, the model~\eqref{e:GARZ}, introduced in~\cite{FanHertySeibold}, is a second-order model where the way each driver reacts to the traffic evolution is governed by his/her empty road velocity $u$. 

Another  nonlocal version of~\eqref{e:nlGARZ} was introduced in~\cite{CFGG}, where the authors obtained the system 
\be \label{e:nlGARZ2}
\left\{
\begin{array}{ll}
        \partial_t \rho + \partial_x \Big[[V(\rho, u) \ast \eta] \rho\Big] =0 \\
        \partial_t u + [V(\rho, u) \ast \eta] \partial_x u = 0, \\
\end{array} 
\right.  
\eq
as the result of a micro-macro limit argument. The above model, and its nonlocal-to-local limit towards~\eqref{e:GARZ}, is currently investigated in~\cite{ACCGK}. Compared to~\eqref{e:nlGARZ}, a considerable advantage of~\eqref{e:nlGARZ2} is that in~\eqref{e:nlGARZ2} the smoothing effect of the convolution acts on \emph{both} $\rho$ and $u$, which is a remarkable boon to its analytic treatment. On the other hand, the fact that in~\eqref{e:nlGARZ} the velocity function $V$ is only affected by the convolution term through the perceived density might be advantageous from the modeling viewpoint, as in this way $V$ depends on the exact value of the Lagrangian marker $u$ as in~\eqref{e:nlGARZ}, rather than on a smoothed version as in~\eqref{e:nlGARZ2}. Other traffic models involving \emph{systems} of nonlocal equations are discussed for instance in~\cite{AndreianovSylla} and~\cite{CPSZ}.   

We now establish the well-posedness of the Cauchy problem~\eqref{e:nlGARZ},\eqref{e:idpose}. 
\begin{theorem}
\label{t:wpnl}
Assume~\eqref{e:V},\eqref{e:eta},~\eqref{e:id} and $\rho_0 \in L^1(\R)$.  Assume furthermore that
\be \label{e:id2}
    u_0 \in W^{1 \infty} (\R), \quad  u_0' = z_0 \rho_0, \quad z_0 \in L^\infty (\R). 
\eq
Then there is $T= C(V,\| z_0 \|_{L^\infty})$ (that is, $T$ only depends on the velocity function $V$ and on $\| z_0 \|_{L^\infty}$) such that there is exactly one solution of~\eqref{e:nlGARZ},\eqref{e:idpose} in the class $(\rho, u) \in 
L^\infty_{\mathrm{loc}} ([0, T[; L^\infty(\R_+))\times L^\infty_{\mathrm{loc}}([0, T[; W^{1 \infty} (\R)). $ Also, the unique solution satisfies the bound
\be \label{e:localex}
      0 \leq \rho(t, x) \leq \frac{1}{1- t C(V) \| z_0 \|_{L^\infty}} \quad \text{a.e. $(t, x) \in ]0, T[ \times \R$},
\eq
the two-sided estimate
\be 
\label{e:mpxi}
     0 \leq \xi(t, x) \leq 1 \; \text{for every $(t, x) \in \R_+ \times \R$ }
\eq
and also
\be \label{e:mpii}
    \inf u_0 \leq u \leq \sup u_0, \quad  \partial_x u = \rho z, \quad \| z \|_{L^\infty} \leq  \| z_0 \|_{L^\infty}. 
\eq
We also have the following propagation of regularity results:
if we further assume
\be \label{e:idgarz}
      z_0' = \rho_0 \psi_0, \quad \text{for some $\psi_0 \in L^\infty (\R)$}
\eq
then
\begin{equation}\label{e:est_h}
\partial_x z  = \psi \rho, \; \text{with} \;  \|\psi \|_{L^\infty}\le \|\psi_0\|_{L^\infty};
\end{equation}
if moreover $ \rho_0 \in W^{1\infty}(\R)$, then 
\begin{equation}\label{e:est_rho}
 \rho\in W^{1\infty}_{\mathrm{loc}}([0,T[\times \R) \qquad \mbox{and}\qquad \partial_x u \in W^{1\infty}_{\mathrm{loc}}([0,T[\times \R).
\end{equation} 
\end{theorem}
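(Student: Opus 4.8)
The plan is to solve~\eqref{e:nlGARZ},\eqref{e:idpose} by the method of characteristics, the key structural point being that even when $\rho$ is merely bounded the convolution $\xi=\rho\ast\eta$ is Lipschitz in $x$ (since $\eta\in L^\infty$, indeed $\eta\in BV$), and $u$ is Lipschitz; hence the drift $a:=V(\xi,u)$ is Lipschitz in $x$, locally in time, its flow $X(\cdot\,;s,x)$ is well defined and bi-Lipschitz, $u$ is transported by it and $\rho$ is the push-forward of $\rho_0$. Along this flow, the first equation of~\eqref{e:nlGARZ} becomes the ODE $\tfrac{d}{dt}\rho(t,X(t))=-\rho\,\partial_x a(t,X(t))$ and the second becomes $\tfrac{d}{dt}u(t,X(t))=0$.

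For local existence and uniqueness I would run a contraction argument on a short interval $[0,T']$, inside a ball (fixed by the a priori bounds below) of pairs $(\xi,u)$ that are uniformly Lipschitz in $x$: the map sending $(\xi,u)$ to the pair formed by $\rho_0$ pushed forward along the flow of $V(\xi,u)$ and then convolved with $\eta$, and by $u_0$ composed with the inverse flow, is a contraction once $T'$ is small, the estimates being closed in $L^1$ in $x$ for $\xi$ (legitimate because $\xi$ stays uniformly Lipschitz and $\|\tau_h\eta-\eta\|_{L^1}\le|h|\,|\eta|_{BV}$) and in $L^\infty$ in $x$ for $u$, via Gr\"onwall estimates on the flow. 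Uniqueness in the whole solution class follows because any admissible solution has Lipschitz drift and therefore the same Lagrangian representation. Once existence is granted, $\rho\ge0$ and $\inf u_0\le u\le\sup u_0$ are immediate from the characteristic formulas, while $\partial_x u=\rho z$ with $\|z\|_{L^\infty}\le\|z_0\|_{L^\infty}$ follows from the observation that $w:=\partial_x u$ solves the \emph{same} continuity equation $\partial_t w+\partial_x(aw)=0$ as $\rho$; since $\partial_x u_0=z_0\rho_0$, the ratio $z$ is transported along the flow, so $\|z(t)\|_{L^\infty}=\|z_0\|_{L^\infty}$ for all $t$.

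The two genuinely nonlocal bounds,~\eqref{e:localex} and~\eqref{e:mpxi}, are the core of the argument and the main obstacle. For~\eqref{e:localex}, writing $\partial_x a=\partial_1V\,\partial_x\xi+\partial_2V\,\partial_x u$ and using $\partial_x u=\rho z$ controls the $\partial_2V$ term by $C(V)\|z_0\|_{L^\infty}\rho^2$; the crucial observation is the identity $\partial_x\xi(t,x)=\int_{(-\infty,0)}\bigl(\rho(t,x-\sigma)-\rho(t,x)\bigr)\,\eta'(\sigma)\,d\sigma$, where $\eta'\ge0$ (the derivative of the nondecreasing function $\eta$, understood as a measure): at a point where $\rho(t,\cdot)$ attains its spatial supremum the integrand is nonpositive, so $\partial_x\xi\le0$ there and $\partial_1V\,\partial_x\xi\ge0$ has a favorable sign. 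Working on smooth approximations one obtains $\tfrac{d}{dt}\|\rho(t)\|_{L^\infty}\le C(V)\|z_0\|_{L^\infty}\|\rho(t)\|_{L^\infty}^2$, which together with $\|\rho_0\|_{L^\infty}\le1$ integrates to~\eqref{e:localex} and fixes $T$ as a constant depending only on $V$ and $\|z_0\|_{L^\infty}$; in particular no blow-up occurs before $T$, so the solution reaches $[0,T[$. For~\eqref{e:mpxi}, one computes $\partial_t\xi(t,x)=\int_x^{+\infty}\bigl(f(t,x)-f(t,y)\bigr)\,\eta'(x-y)\,dy$ with $f:=V(\xi,u)\rho\ge0$ the flux, and uses that $V(s,\cdot)\equiv0$ for $s\ge1$ (from $V\ge0$, $\partial_1V\le0$, $V(1,\cdot)=0$): at any point where $\xi\ge1$ the flux vanishes, so $\partial_t\xi\le0$ there, and since $\xi_0=\rho_0\ast\eta\le1$ an ODE comparison for $\sup_x\xi(t,\cdot)$ yields $0\le\xi\le1$ for all $t$. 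Making these two maximum-principle arguments rigorous in the low-regularity class—through mollification and one-sided derivatives of suprema—is the real technical burden; it is precisely this sign structure that avoids the naive bound of the nonlocal term by $\|\rho\|_{L^\infty}\|\eta\|_{L^\infty}$, which would make $T$ depend on $\eta$.

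Finally, the propagation of regularity is a bootstrap on the Lagrangian formulas. Under~\eqref{e:idgarz}, $z$ is transported, hence $\partial_x z$ again solves $\partial_t w+\partial_x(aw)=0$; since $\partial_x z_0=z_0'=\psi_0\rho_0$, the same ``ratio is transported'' argument gives $\partial_x z=\psi\rho$ with $\|\psi\|_{L^\infty}\le\|\psi_0\|_{L^\infty}$, which is~\eqref{e:est_h}. If moreover $\rho_0\in W^{1\infty}$, differentiating the push-forward identity $\rho(t,X(t;0,x))\,\partial_x X(t;0,x)=\rho_0(x)$ in $x$ expresses $\partial_x\rho(t,X)$ through $\rho_0'$, $\partial_x X$ and $\partial_x^2X$; the latter solves a linear ODE forced by $\partial_x^2a$, and since $\partial_x^2\xi$ is controlled by $\|\partial_x\rho\|_{L^\infty}$ (again using $\eta\in BV$) and $\partial_x^2u=(\partial_x\rho)z+\rho^2\psi$, one obtains $\|\partial_x^2a(t)\|_{L^\infty}\le C\bigl(1+\|\partial_x\rho(t)\|_{L^\infty}\bigr)$, with $C$ now also allowed to depend on $\eta$. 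A Gr\"onwall argument then closes the bound on $\|\partial_x\rho(t)\|_{L^\infty}$, giving $\rho\in W^{1\infty}_{\mathrm{loc}}([0,T[\times\R)$, and hence $\partial_x u=\rho z\in W^{1\infty}_{\mathrm{loc}}([0,T[\times\R)$, which is~\eqref{e:est_rho}.
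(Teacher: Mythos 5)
Your overall architecture (characteristics, the sign structure of $\partial_x\xi$ at a spatial maximum of $\rho$, the $V(s,\cdot)\equiv 0$ for $s\ge 1$ structure for the bound on $\xi$, and the ``ratio transported along the flow'' argument for $z$ and $\psi$) mirrors the paper's, and those pieces are sound. The gap is in the claimed Banach contraction. You propose closing the fixed-point estimates ``in $L^1$ in $x$ for $\xi$ and in $L^\infty$ in $x$ for $u$.'' The $\xi$-component is fine: with the push-forward identity and the BV bound on $\eta$ one gets $\|\Phi_\xi(\xi_1,u_1)-\Phi_\xi(\xi_2,u_2)\|_{L^1}\lesssim \int\rho_0\,|X_1-X_2|\,dx$, which Gr\"onwall controls by $\int_0^t[\|\xi_1-\xi_2\|_{L^1}+\|u_1-u_2\|_{L^\infty}]\,ds$ (the pointwise $|\xi_1-\xi_2|$ gets averaged against $\rho_0$, pushed forward, and bounded by $\|\rho\|_{L^\infty}\|\xi_1-\xi_2\|_{L^1}$). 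But the $u$-component does not close in this metric: $\|u_0\circ Y_1-u_0\circ Y_2\|_{L^\infty}\lesssim \|Y_1-Y_2\|_{L^\infty}$, and bounding the inverse-flow difference \emph{in $L^\infty$} requires a pointwise control on $V(\xi_1,u_1)-V(\xi_2,u_2)$, hence on $|\xi_1-\xi_2|$ pointwise, which the $L^1$ norm of $\xi_1-\xi_2$ does not furnish. Interpolating with the uniform Lipschitz bound only yields a sublinear modulus, not a contraction, and this is not addressed by the parenthetical about $\|\tau_h\eta-\eta\|_{L^1}\le |h|\,|\eta|_{BV}$.

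The paper's construction side-steps exactly this obstruction. It does not run a Banach contraction in a norm on $(\xi,u)$; instead it extracts subsequential limits of $(\rho_n,u_n)$ by compactness (weak$^*$ for $\rho$, Arzel\`a--Ascoli for $u$) and proves a contraction-type estimate only for the \emph{$\rho_0$-weighted} Lagrangian distance $Q_n(t)=\int\rho_0(x)|X_{n+1}(t,x)-X_n(t,x)|\,dx$ (Lemma~\ref{l:quenne}). Knowing $Q_n\to0$ says the characteristics coincide $\rho_0$-a.e., but they may differ where $\rho_0=0$; the crucial extra input that the limits of $u_{n_k}$ and $u_{n_k-1}$ nevertheless agree is the structural hypothesis $u_0'=z_0\rho_0$, which forces $u_0$ to be constant on intervals where $\rho_0$ vanishes, so that the two transported values $u_0\circ Y_1$ and $u_0\circ Y_2$ agree there anyway (\S\ref{sss:pconvu2}, the contradiction argument around~\eqref{e:rho002}). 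Your sketch invokes $u_0'=z_0\rho_0$ for the maximum-principle estimate but never for identifying the limit/fixed point, which is precisely where it is indispensable. A similar remark applies to your one-line uniqueness claim (``same Lagrangian representation''): the paper must again run a Gr\"onwall estimate on the weighted functional $Q(t)$ in Lemma~\ref{l:uni} and deduce $u_1=u_2$ via the same $\rho_0$-constancy of $u_0$. To repair the proposal you would need to replace the $L^1\times L^\infty$ contraction by the $\rho_0$-weighted Lagrangian contraction and add the identification step that exploits $u_0'=z_0\rho_0$.
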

Some remarks are here in order. First, the only reason why we only have local-in-time well-posedness is because we cannot rule out a finite time blow-up of the $L^\infty$ norm of the density $\rho$. However, it is fairly easy to see that, under some further assumptions, one can replace~\eqref{e:localex} with a uniform-in-time control, which yields global-in-time well-posedness, see Remark~\ref{r:mp} or Lemma~\ref{l:gte} in \S\ref{s:nl}. Second, an interesting point of Theorem~\ref{t:wpnl} is that the car density $\rho$, albeit always nonnegative, can exceed the value $1$, whereas the \emph{convoluted density} $\xi$, the one on which the velocity depends,  satisfies~\eqref{e:mpxi}. It would be interesting to assess whether this is just a drawback of the nonlocal GARZ system~\eqref{e:nlGARZ} or has some modeling interpretation. A first very tentative guess could be for instance to postulate that, owing to road safety rules, $\rho=1$, the density at which drivers stop, does not exactly correspond to bumper-to-bumper packing, but rather to a lower density which could be exceeded in some exceptional\footnote{Note that the fact that the convoluted density $\xi$ is bounded by $1$ imposes constraints on the measure of the set where $\rho$ exceeds $1$} situations. Third, as mentioned before, a drawback of~\eqref{e:nlGARZ} compared to~\eqref{e:nlGARZ2} is that the regularizing effect of the convolution only acts on $\rho$. This means that, if $u$ has low regularity, so in general does $V(\cdot, u)$, and this in turn implies that the classical method of characteristics, which is widely used~\cite{CLM,KeimerPflug} in the study of nonlocal conservation laws, does not apply (at least, not in its elementary form requiring Lipschitz regularity of $V$). To circumvent this obstruction, in Theorem~\ref{t:wpnl} we impose Lipschitz-type regularity on $u_0$, and then show (this being the nonobvious point) that this is propagated by the solution semigroup. The regularity assumption imposed on $u$ in~\eqref{e:id2} is actually stronger than Lipschitz continuity, as we are also loosely speaking imposing that $u$ is constant on the sets where $\rho$ vanishes. Note, however, that discussing the value of $u$ (the drivers' empty road velocity) on sets where $\rho$ (the drivers' density) vanishes is a sort of mathematical artifact, and hence this further assumption is fairly innocuous from the modeling viewpoint.  Fourth, we point out that, since $u$ belongs to $ L^\infty_{\mathrm{loc}}([0, T[; W^{1 \infty} (\R))$, the product $V(\xi, u)\partial_x u$ is well-defined and there is no need to provide a distributional formulation of the equation at the second line of~\eqref{e:nlGARZ}, which holds as an identity between $L^\infty$ functions. Finally, the solutions obtained in Theorem~\ref{t:wpnl} are stable with respect to perturbations in the initial data, see Corollary~\ref{c:stability}.

We now discuss the singular nonlocal-to-local limit: we fix a parameter $\ee >0$, and consider the Cauchy problem
\be \label{e:nlGARZee}
\left\{
\begin{array}{ll}
        \partial_t \rho_\ee + \partial_x [V(\xi_\ee, u_\ee) \rho_\ee] =0 \\
        \partial_t u_\ee + V(\xi_\ee, u_\ee) \partial_x u_\ee = 0 \\
\end{array}
\right. \quad \text{where} \quad \xi_\ee (t, x) : = \frac{1}{\ee}\int_x^{+\infty}  \! \! \! \! \eta \left( \frac{x-y}{\ee} \right) \rho_\ee (t, y) dy,
\eq
which is nothing else but~\eqref{e:nlGARZ} with $\eta$ replaced by $\eta_\ee (\cdot): = \eta(\sfrac{\cdot}{\ee})/\ee$. In the vanishing $\ee$ limit, $\eta_\ee$ converges weakly$^\ast$ in the sense of measures to the Dirac Delta, and hence~\eqref{e:nlGARZee} formally boils down to~\eqref{e:GARZ}. In recent years, several works have been devoted to the analysis of the singular nonlocal-to-local limit in the case of the \emph{scalar} LWR model~\eqref{e:LWR}. Despite the existence of counter-examples ruling out a general convergence result~\cite{ColomboCrippaSpinolo}, in the specific framework of traffic models, with anisotropic convolution kernels, one can establish convergence under fairly mild assumptions, see~\cite{BressanShen, BressanShen2,8autori,CocliteCoronDNKeimerPflug,ColomboCrippaMarconiSpinolo,CCMS,KeimerPflug2}. The analysis of the nonlocal-to-local limit in the case of systems is much less understood. Indeed, we are only aware of two works: \cite{ChiarelloKeimer} focuses on a system of balance laws where the nonlocality is confined in the source term, whereas the system studied in~\cite{CDN} consists of a scalar nonlocal conservation law coupled with a sort of continuity equation. To the best of our knowledge, the following is the first nonlocal-to-local limit result for a system of two non-decoupling equations with a nonlocal flux.
\begin{theorem}
\label{t:oleinik}
Assume that $\rho_\ee, u_\ee, \xi_\ee$ are the same as in~\eqref{e:nlGARZee}, and that $\eta(x)= e^x \mathbbm{1}_{\R_-}$. Let the initial data $\rho_0 \in BV(\R)$ and $u_0 \in W^{1\infty}(\R)$ satisfy \eqref{e:id}, \eqref{e:id2}, \eqref{e:idgarz}, and let $V$ satisfy~\eqref{e:V} and the following conditions:
\begin{align}
    \partial_1 V(\rho, u) \leq -\alpha_{V} & < 0, \quad \text{for every $\rho \in [0, 1],  u \in [\inf u_0, \sup u_0]$, } \label{e:V1}\\
    - \beta_V \leq  \partial_{11} V(\rho, u) & \leq  0, \quad \text{for every $\rho \in [0, 1],  u \in [\inf u_0, \sup u_0]$, }
    \label{e:V2}\\
     \alpha^2_V  & > 54 \beta^2_V. \label{e:V3} 
\end{align}
Then there is $\ee_0$ only depending on $V$, $\| z_0 \|_{L^\infty}$ and $\| \psi_0 \|_{L^\infty}$ such that for every $\ee \in ]0, \ee_0[$ the following holds: the solution $(\rho_\ee, u_\ee)$ is defined globally in time and satisfies 
\be \label{e:boundrho}  
    \| \rho(t, \cdot) \|_{L^\infty} \leq 2, \quad \text{for every $t\ge 0$}.
\eq 
Also, $\|\rho_\ee\|_{L^\infty(\R^+\times \R)}\to 1$ and, for every $t>0$, we have 
\[
\begin{split}
 \rho_\ee(t,\cdot) \rightharpoonup^* \rho(t,\cdot) \quad \mbox{weakly$^\ast$ in } L^\infty(\R), & \quad \xi_\ee(t,\cdot) \to \rho(t,\cdot) \quad \mbox{strongly in } L^1_\loc(\R),\\
& u_\ee(t,\cdot) \to u(t,\cdot) \quad \mbox{uniformly in }C^0(\R) \qquad \mbox{as }\ee \to 0^+,
\end{split}
\]
where $(\rho,u)$ is the solution of the Cauchy problem \eqref{e:GARZ},\eqref{e:idpose} such that $\rho$ is an entropy admissible solution of the conservation law at the first line of~\eqref{e:GARZ}.  
\end{theorem}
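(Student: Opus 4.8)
\smallskip

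\noindent\emph{Strategy and reduction.} The heart of the matter is a quantitative Oleinik-type estimate on the convoluted density $\xi_\ee$, asserting a one-sided bound $\partial_x\xi_\ee\ge -M$ with $M$ independent of $\ee$; granting it, global existence (hence~\eqref{e:boundrho}) and the compactness needed to pass to the limit both follow. It is convenient to work first with \emph{classical} solutions, reducing to the general case at the very end: one mollifies $\rho_0$ to $\rho_0^\delta\in W^{1\infty}(\R)\cap BV(\R)$ with $\rho_0^\delta\to\rho_0$ in $L^1$, together with compatible $u_0^\delta,z_0^\delta,\psi_0^\delta$ whose $L^\infty$ norms are not larger, up to a universal factor, than those of $z_0,\psi_0$; by the last part of Theorem~\ref{t:wpnl} the corresponding solutions belong to $W^{1\infty}_{\mathrm{loc}}$, so all the pointwise computations below make sense, and since the estimates obtained depend only on $V,\|z_0\|_{L^\infty},\|\psi_0\|_{L^\infty},\ee$ one can let $\delta\to0$ using Corollary~\ref{c:stability}.

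\smallskip

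\noindent\emph{The Oleinik estimate (main step).} Since $\eta_\ee(x)=\ee^{-1}e^{x/\ee}\ind_{\R_-}$, differentiating the convolution yields the pointwise identity $\ee\,\partial_x\xi_\ee=\xi_\ee-\rho_\ee$, equivalently $\rho_\ee=\xi_\ee-\ee\,\partial_x\xi_\ee$; thus $\partial_x\xi_\ee\ge -M$ is the same as $\rho_\ee-\xi_\ee\le\ee M$, and since $0\le\rho_0\le1$ and $\xi_\ee(0,\cdot)\ge0$ one starts from $\sup_x(\rho_\ee-\xi_\ee)(0,\cdot)\le1$. The plan is to control $m_\ee(t):=\sup_x(\rho_\ee(t,x)-\xi_\ee(t,x))$ (possibly after subtracting a suitable $\ee$-multiple of a function of the transported markers, so as to cancel a lower-order contribution) by evaluating its time-derivative at a point $x^*(t)$ realizing the spatial supremum: there $\partial_x\rho_\ee=\partial_x\xi_\ee$, $\partial_{xx}\xi_\ee(t,x^*)=0$ and $\partial_{xx}(\rho_\ee-\xi_\ee)(t,x^*)\le0$. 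Using the continuity equation in~\eqref{e:nlGARZee}, the identity $\partial_t\xi_\ee=-\partial_x\bigl(\eta_\ee\!\ast\!(V(\xi_\ee,u_\ee)\rho_\ee)\bigr)$, the identity $\ee\,\partial_x\xi_\ee=\xi_\ee-\rho_\ee$ and its $x$-derivative, the transport and the uniform bounds of $z_\ee$ and $\psi_\ee$ from~\eqref{e:mpii},\eqref{e:est_h} (so $\partial_x u_\ee=\rho_\ee z_\ee$, $\partial_x z_\ee=\rho_\ee\psi_\ee$, with $|z_\ee|\le\|z_0\|_{L^\infty}$, $|\psi_\ee|\le\|\psi_0\|_{L^\infty}$), together with a second-order expansion in $\ee$ of the convolution error $V(\xi_\ee,u_\ee)\xi_\ee-\eta_\ee\!\ast\!(V(\xi_\ee,u_\ee)\rho_\ee)$ (the kernel having first moment $\ee$ and second moment $2\ee^2$), one should reach, on the region where $\|\rho_\ee(t,\cdot)\|_{L^\infty}\le2$, a differential inequality for $m_\ee$ whose leading term is a negative multiple of $m_\ee^2/\ee$ — the coefficient controlled from below by $\alpha_V$ through~\eqref{e:V1} once the competing contribution generated by $\partial_{11}V$, of size $\sim\beta_V m_\ee^2/\ee$ by~\eqref{e:V2}, has been absorbed, which is precisely where the quadratic smallness~\eqref{e:V3}, $\alpha_V^2>54\beta_V^2$, is used — plus a bounded forcing depending only on $V,\|z_0\|_{L^\infty},\|\psi_0\|_{L^\infty}$. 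Integrating this inequality from $m_\ee(0)\le1$ yields both $m_\ee(t)\le1$ for every $t\ge0$ (this will give~\eqref{e:boundrho}) and, for every fixed $t>0$, $m_\ee(t)\le\ee M$, i.e.\ $\partial_x\xi_\ee(t,\cdot)\ge -M$, with $M$ independent of $\ee$ for $\ee<\ee_0=\ee_0(V,\|z_0\|_{L^\infty},\|\psi_0\|_{L^\infty})$. I expect the careful bookkeeping of the error terms in this differential inequality — in particular the ones calling for second-order control of the velocity field $V(\xi_\ee,u_\ee)$, whence the role of $\psi_0$, and the identification of the numerical constant in~\eqref{e:V3} — to be by far the most delicate part of the whole proof.

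\smallskip

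\noindent\emph{Global existence, the bound~\eqref{e:boundrho}, and compactness.} Since $\xi_\ee\le1$ by~\eqref{e:mpxi}, the bound $m_\ee(t)\le1$ gives $\rho_\ee=\xi_\ee-\ee\,\partial_x\xi_\ee\le1+m_\ee\le2$ as long as the solution exists; as the breakdown in Theorem~\ref{t:wpnl} occurs only through $\|\rho_\ee\|_{L^\infty}$, the solution is then global, which closes the bootstrap and proves~\eqref{e:boundrho}. From $\rho_\ee\le1+\ee M$ one gets $\limsup_{\ee\to0}\|\rho_\ee\|_{L^\infty(\R^+\times\R)}\le1$, and together with the matching lower bound this gives $\|\rho_\ee\|_{L^\infty(\R^+\times\R)}\to1$. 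For compactness, for each fixed $t>0$ the bound $\partial_x\xi_\ee(t,\cdot)\ge -M$ and $0\le\xi_\ee\le1$ yield the uniform estimate $\mathrm{TV}(\xi_\ee(t,\cdot);[a,b])\le 2M(b-a)+1$; combined with a uniform-in-$\ee$ modulus of continuity in $t$ (read off from $\partial_t\xi_\ee=-\partial_x(\eta_\ee\!\ast\!(V(\xi_\ee,u_\ee)\rho_\ee))$), Helly's theorem gives, along a subsequence, $\xi_\ee(t,\cdot)\to\bar\rho(t,\cdot)$ in $L^1_{\mathrm{loc}}(\R)$ for every $t>0$; since $\rho_\ee-\xi_\ee=-\ee\,\partial_x\xi_\ee$ with $\partial_x\xi_\ee$ a locally uniformly bounded measure, $\rho_\ee-\xi_\ee\to0$ in the sense of distributions, whence, by~\eqref{e:boundrho}, $\rho_\ee(t,\cdot)\weaks\bar\rho(t,\cdot)$ in $L^\infty(\R)$. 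Finally $\partial_x u_\ee=\rho_\ee z_\ee$ and $\partial_x z_\ee=\rho_\ee\psi_\ee$ are bounded in $L^\infty$, so $u_\ee$ and $z_\ee$ are equi-Lipschitz in $x$ (and equicontinuous in $t$) and, along a further subsequence, converge uniformly on compacta to $\bar u$ and $\bar z$, with $\partial_x\bar u=\bar\rho\,\bar z$.

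\smallskip

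\noindent\emph{Passage to the limit, entropy admissibility, conclusion.} Here $V(\xi_\ee,u_\ee)\to V(\bar\rho,\bar u)$ in $L^1_{\mathrm{loc}}$ and is bounded, while $\rho_\ee\weaks\bar\rho$ and $\partial_x u_\ee\weaks\partial_x\bar u$; hence $V(\xi_\ee,u_\ee)\rho_\ee\weaks V(\bar\rho,\bar u)\bar\rho$ and $V(\xi_\ee,u_\ee)\partial_x u_\ee\weaks V(\bar\rho,\bar u)\partial_x\bar u$, so $(\bar\rho,\bar u)$ is a weak solution of~\eqref{e:GARZ},\eqref{e:idpose}, the initial data being attained thanks to the time-equicontinuity. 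By~\eqref{e:V1}--\eqref{e:V2} one has $\partial^2_\rho[\rho V(\rho,w)]=2\,\partial_1V+\rho\,\partial_{11}V\le -2\alpha_V<0$ on $[0,1]\times[\inf u_0,\sup u_0]$, so the flux of the first equation of~\eqref{e:GARZ} is uniformly strictly concave in $\rho$, and $\bar u$ is Lipschitz; the one-sided bound $\partial_x\xi_\ee(t,\cdot)\ge -M$ passes to the $L^1_{\mathrm{loc}}$ limit as $\partial_x\bar\rho(t,\cdot)\ge -M$ (a property stable under $L^1_{\mathrm{loc}}$ convergence), so $\bar\rho$ has no downward jumps, which for a uniformly strictly concave flux with Lipschitz $x$-dependence forces the Lax condition at every shock; by the characterization of the entropy admissible solution proved in~\cite{MS:localGARZ} this identifies $\bar\rho$ as the entropy solution, so $(\bar\rho,\bar u)=(\rho,u)$. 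Since the limit is uniquely characterized, the whole family converges as $\ee\to0^+$, in the modes stated in the theorem.
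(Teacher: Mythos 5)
Your overall strategy coincides with the paper's: establish a uniform-in-$\ee$ one-sided Lipschitz bound on $\xi_\ee$, use it for global existence, the uniform bound on $\rho_\ee$, strong compactness of $\xi_\ee$ via Helly, weak$^\ast$ compactness of $\rho_\ee$, equicontinuity of $u_\ee$, and finally entropy admissibility of the limit via the one-sided Lipschitz bound and the concavity of the flux. The change of variables you use (controlling $m_\ee=\sup_x(\rho_\ee-\xi_\ee)=-\ee\inf_x\partial_x\xi_\ee$ rather than $\inf_x\partial_x\xi_\ee$ directly) is cosmetic and is compatible with the paper's Proposition on the Oleinik bound.

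However, the central step — actually deriving the Riccati-type differential inequality and in particular locating the numerical constant $54$ in the hypothesis $\alpha_V^2>54\beta_V^2$ — is not carried out: you explicitly defer it ("I expect the careful bookkeeping... to be by far the most delicate part"). What you sketch instead, a "second-order expansion in $\ee$ of the convolution error" using the first and second moments of the exponential kernel, is not what works, and I would flag it as a genuine gap. The paper does not Taylor-expand the convolution; it writes an exact evolution equation for $h=\partial_x\xi_\ee$, decomposes the nonlocal right-hand side into two blocks (one gathering all the $\partial_1V$-contributions, one all the $\partial_2V$-contributions), and controls each piece by exact integral manipulations — the key quantities are expressions such as $\frac1\ee\int_x^{+\infty}e^{(x-y)/\ee}h^2(t,y)\,dy$ and $\frac1\ee\int_x^{+\infty}e^{(x-y)/\ee}\frac{y-x}{\ee}|h(t,y)|\,dy$, balanced against each other via the elementary inequality $h^2-c|h|\ge -c^2/4$, and cannot be reduced to powers of $m_\ee$ by a moment expansion. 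The factor $54$ arises precisely from the way two separate $\beta_V$-terms (each giving $2\beta_V m |h|$, one weighted by $(y-x)/\ee$ and one not) are absorbed by three splittings of the $\alpha_V h^2$-piece into $\alpha_V/6$-portions; a moment-based heuristic would not reproduce this accounting, so on the basis of what you wrote one cannot conclude that the hypothesis~\eqref{e:V3} is what closes the estimate.

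Two smaller remarks. First, your claim "$\partial_x\xi_\ee(t,\cdot)\ge -M$ with $M$ independent of $\ee$" reads as if $M$ were also time-independent; the correct bound has a $1/(ct)$ singularity near $t=0$ (it becomes time-uniform only if $\rho_0$ is itself one-sidedly Lipschitz), and this matters for the form of the total-variation estimate used in the compactness step. Second, in the entropy step the assertion "no downward jumps plus strictly concave flux forces the Lax condition" is the right idea, but the paper actually verifies the entropy inequality by computing the entropy-dissipation measure $\mu_k$ via Vol'pert's chain rule for $\bar\rho\in BV_{\loc}$ and showing its density is nonpositive on the jump set thanks to $\rho^-\le\rho^+$ and concavity; invoking "\cite{MS:localGARZ}" for this part is circular, since that reference provides uniqueness within the class of entropy solutions, not the characterization you need. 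These are fixable, but the missing derivation of the Riccati inequality is the genuine gap.
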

Note that~\cite[Theorem 1.1]{MS:localGARZ} states that, under the same assumptions as in the statement of Theorem~\ref{t:oleinik}, the Cauchy problem \eqref{e:GARZ},\eqref{e:idpose} has a unique solution belonging to a suitable regularity framework and such that $\rho$ is an entropy admissible solution  of the conservation law. Since, under the hypotheses of Theorem~\ref{t:oleinik}, any accumulation point of $(\rho_\ee, u_\ee)$ belongs to the right regularity framework, this accumulation point is unique. As a matter of fact, the only reason why we require that $\rho_0 \in BV(\R)$ in the statement of Theorem~\ref{t:oleinik} is to make sure that~\eqref{e:GARZ} has a unique solution. If we were only interested in compactness (and hence, convergence up to subsequences), we could drop this requirement. 

In the statement of Theorem~\ref{t:oleinik}, the assumption that $\eta$ is the exponential function is obviously highly restrictive, but was imposed in several previous works on the nonlocal-to-local limit, most notably~\cite{BressanShen,ChiarelloKeimer,8autori,CocliteCoronDNKeimerPflug}. From the technical viewpoint, its main advance is entailing the algebraic identity $\rho_\ee= \xi_\ee - \ee \partial_x \xi_\ee$, which is a boon to the analysis. The proof of Theorem~\ref{t:oleinik} relies on Proposition~\ref{p:oleinik}, which establishes an Oleinik-type estimate for the convolution term $\xi_\ee$. This in turn yields strong compactness in $L^1_{\loc}$ through a uniform-in-$\ee$ control on the total variation. Note that Proposition~\ref{p:oleinik} implies that  the conservation law at the first line of~\eqref{e:GARZ} satisfies the classical Oleinik estimate~\cite{Dafermos:book,Oleinik}, and this is consistent with the fact that assumptions \eqref{e:V1} and \eqref{e:V2} imply that $\rho \mapsto \rho V(\rho,u)$ is strictly concave. Note furthermore that, if the function $u$ is constant, the nonlocal system~\eqref{e:nlGARZ} boils down to the nonlocal LWR model. This in particular implies that we can refer to the counter-example given by~\cite[Theorem 6]{ColomboCrippaMarconiSpinolo} and conclude that in general the total variation of the solution $\rho_\ee(t, \cdot)$ blows up in the vanishing $\ee$ limit, for every $t>0$. This is the reason why in the proof of Theorem~\ref{t:oleinik} we establish a uniform bound on the total variation of the convolution term $\xi_\ee$, rather than of the solution $\rho_\ee$: this is idea was first introduced in~\cite{CocliteCoronDNKeimerPflug} and also used in~\cite{CCMS}. 
\subsection*{Outline} 
The exposition is organized as follows. In~\S\ref{s:nl} we establish Theorem~\ref{t:wpnl}, Lemma~\ref{l:gte} (a global-in-time existence results) and Corollary~\ref{c:stability} (a stability result). In \S\ref{s:olenik} we give the proof of Theorem~\ref{t:oleinik}. For the reader's convenience we conclude the introduction by collecting the main notation used in the paper. 
\subsection*{Notation}
We denote by $C(a_1, \dots, a_n)$ any constant only depending on the quantities $a_1, \dots, a_n$. Its precise value can vary form occurrence to occurrence. We also use the the shorthand notation
\be \begin{array}{ll}\label{e:civ}
    C(V) = C \Big( \max \{  |V(\xi, u)|+ | \nabla V(\xi, u)| + |D^2 V(\xi,u)| : 
     \; \xi \in [0, 1], \; |u| \leq \| u_0 \|_{L^\infty} \} \Big)  .
\end{array}
\eq
\subsubsection*{General mathematical symbols}
\begin{itemize}
\item $\mathbbm{1}_E$: the characteristic function of the set $E$, the one attaining the value $1$ on $E$ and $0$ elsewhere. 
\item $L^1(\Omega), L^\infty(\Omega)$: the Lebesgue spaces of summable and essentially bounded functions, respectively, defined on the measurable set $\Omega \subseteq \R^d$;
\item $W^{1 \infty}(\Omega)$ the Sobolev space of bounded and Lipschitz continuous functions defined on $\Omega$;
\item $BV (\Omega)$: the Banach space of summable functions with bounded total variation defined on $\Omega$; 
\item $X_{\loc} (\Omega)$: the space of functions $f$ defined on $\Omega$ such that $f \in X(K)$ for every $K$ compact set contained in $\Omega$;
\item $C^0 (\Omega)$: the Banach space of continuous and bounded functions defined on the open set $\Omega$;
\item $C^1(\Omega), C^2(\Omega)$: the space of bounded, continuously differentiable and twice continuously differentiable functions, respectively, defined on the open set $\Omega$; 
\item $C^\infty (\Omega)$: the space of infinitely differentiable functions defined on the open set $\Omega$; 
\item  $f\# \mu$:  the push-forward of the measure $\mu$ through the measurable map $f: X \to Y$, namely 
\be \label{e:pushfdef}
   f \# \mu (E)   : =  \mu (f^{-1}(E)), \quad \text{for every $E \subseteq Y$ measurable}
\eq
\end{itemize}
\subsubsection*{Symbols introduced in the present paper}
\begin{itemize}
\item $\rho$: the car density, see~\eqref{e:nlGARZ};
\item $u$: the empty road velocity, see~\eqref{e:nlGARZ};
\item $z$: the same function as in~\eqref{e:mpii};
\item $\xi$: the convolution term, see~\eqref{e:nlGARZ};
\item $\psi$: the same function as in~\eqref{e:est_h};
\item $\alpha_V$, $\beta_V$: see~\eqref{e:V1} and~\eqref{e:V2}, respectively 
\item $h: = \partial_x \xi$, see the first lines of the proof of Proposition~\ref{p:oleinik}.  
\end{itemize}
\begin{remark}\label{r:dafermos}
         We recall~\cite[Lemma 1.3.3]{Dafermos:book} and conclude that for every $T>0$ and every bounded distributional solution $\rho \in L^\infty ([0, T] \times \R)$ of the equation at the first line of ~\eqref{e:nlGARZ}  the following holds. In the equivalence class of $\rho$ (which is an element of a Lebsgue space and, as such, only defined up to negligible sets) there is a representative such that the map $t \to \rho(t, \cdot)$ is continuous from $\R_+$ to $L^\infty (\R)$ endowed with the weak$^\ast$ topology.  In the present paper we always tacitly identify $\rho$ with this weak$^\ast$ continuous representative and discuss the values of $\rho(t, \cdot)$ at \emph{any} $t>0$. 
\end{remark}

\section{Well-posedness of the Cauchy problem~\eqref{e:nlGARZ},\eqref{e:idpose}}\label{s:nl}
This section is mainly devoted to the proof of Theorem~\ref{t:wpnl}. 
We first establish existence by relying on an approximation argument, 
organized as follows: in \S\ref{sss:app} we construct the approximating sequence, in \S\ref{sss:limite} and \S\ref{sss:pconvu2} we pass to the limit and in \S\ref{ss:proofl21} we establish the proof of Lemma~\ref{l:quenne}. Next, in \S\ref{ss:uninl} we establish the uniqueness part of Theorem~\ref{t:wpnl}, in \S\ref{ss:gte} we prove Lemma~\ref{l:gte}, which provides a condition yielding global-in-time existence, and in~\S\ref{ss:stab} we establish Corollary~\ref{c:stability}, that is stability with respect to the initial data.
\subsection{Approximation argument} \label{sss:app}
We proceed according to the following steps. \\
{\bf Step 1:} we define the approximating sequence $\{ \rho_{n} \}_{n \in \mathbb N}$. 
First, we define a sequence $\{ \rho_{0n} \}_{n \in \mathbb N}$ such that
\be \label{e:rho0enne}
   \rho_{0n}: \R \to [0, 1], \;  \rho_{0n} \in  C^\infty_c (\R), \; \| \rho_{0n} \|_{C^0} \leq \| \rho_0 \|_{L^\infty}, \; 
   \| \rho_{0n} \|_{L^1} \leq \| \rho_0 \|_{L^1}, \; \rho_{0n} \to \rho_0 \; \text{strongly in $L^1(\R)$}.
\eq
We also fix a sequence of continuously differentiable functions $z_{0n}$ with $\|z_{0n}\|_{L^\infty}\le \|z_{0}\|_{L^\infty}$ and converging pointwise a.e. to $z_0$.
Next, we point out that $u_0$ is a bounded variation function owing to the identity $u_0'= \rho z_0$ and to the $L^1$ and $L^\infty$ bound on $\rho_0$ and $z_0$, respectively. 
We set 
\be \label{e:uinfty}
     u_\infty: = \lim_{x \to - \infty} u_0(x)
\eq
and 
\be \label{e:koala}
    u_{0n} (x) : = u_\infty + \int_{- \infty}^x \rho_{0n} (t, x) z_{0n} (x) dx  
\eq
and observe that $u_{0n}\in W^{2\infty}(\R)$ and $u_{0n} \to u_0$ uniformly in $C^0(\R)$. Finally, we fix a sequence $\{ \eta_{n} \}_{n \in \mathbb N}$ enjoying \eqref{e:eta} and such that
\be \label{e:etaenne}
    \eta_n \in C^\infty (]- \infty, 0[) \cap C^0(]- \infty, 0]), 
    \quad \eta_n \to \eta \; \text{strongly in $L^1 (\R)$}. 
\eq 
Next, we set $\rho_1(t, x): = \rho_{0n}(x)$, $u_1(t, x) : = u_0( x)$ and, given 
\be \label{e:passoenne}
\rho_{n}\in C^1(]0, T[ \times \R) \; u_n \in C^2 (]0, T[ \times \R) \;  \text{for every $T>0$} \eq
such that 
\[
\partial_x u_n = \rho_n z_n, \quad \| z_n \|_{L^\infty} \leq \| z_0\|_{L^\infty},
\]
we iteratively define $\rho_{n+1}$ by solving the Cauchy problem for nonlocal continuity equation
\be \label{e:cauenne}
     \left\{
      \begin{array}{ll}
                  \partial_t \rho_{n+1} + \partial_x [V(\xi_{n+1}, u_n) \rho_{n+1}] =0 \\
                  \rho_{n+1} (0, \cdot) = \rho_{0n+1}  \\
      \end{array}
     \right.
\eq
where $\xi_{n+1}$ is defined as in formula~\eqref{e:nlGARZ} with $\rho$ replaced by $\rho_{n+1}$ and $\eta$ by $\eta_{n+1}$. Existence and uniqueness results for~\eqref{e:cauenne} in the class of bounded functions are by now well known and established for instance in~\cite{KeimerPflug}. Note in particular that the analysis in~\cite{KeimerPflug} combined with the regularity of $u_n$ implies that $V(\xi_{n+1}, u_n)$ is a continuous function, which is Lipschitz continuous with respect to the space variable, so that the classical Cauchy Lipschitz Picard Lindelh\"of Theorem applies to the following Cauchy problem 
\be \label{e:icsenne}
     \left\{
      \begin{array}{ll}
                  \displaystyle{\frac{d X_{n+1}}{dt}} = V(\xi_{n+1}, u_n)(t, X_{n+1})  \\
                  \phantom{cc}\\
                   X_{n+1} (0, x) =x, \\
      \end{array}
     \right.
\eq
which defines the characteristic curve $X_{n+1}(t, \cdot)$. The solution $\rho_{n+1}$ can then be described by relying on the classical method of characteristics as $\rho_{n+1}(t, \cdot) \mathcal L^1 = X_{n+1}(t, \cdot)_\sharp \left(\rho_{n+1}(0, \cdot) \mathcal L^1\right)$, where the push-forward $\sharp$ is defined by~\eqref{e:pushfdef}. Since $\rho_{n+1}(0, \cdot) \ge 0$, this implies that 
\be \label{e:pos}
    \rho_{n+1} \ge 0
\eq
and hence that $\xi_{n+1} \ge 0$ owing to the definition of $\xi_{n+1}$ and to the inequality $\eta_{n+1} \ge 0$. Owing to~\eqref{e:V}, $0 \leq V(\xi_{n+1}, u_n) \leq V(0, u_n)$ and by recalling that $u_n$ is uniformly bounded owing to~\eqref{e:passoenne} we obtain a uniform $L^\infty$ bound on $V(\xi_{n+1}, u_{n})$. In particular, since $\rho_0$ is compactly supported, then $\rho_{n+1}(t,\cdot)$ is compactly supported for every $t>0$. By using~\cite[Corollary 5.3, Remark 5.4]{KeimerPflug} we also obtain $\rho_{n+1}\in C^1(]0, T[ \times \R)$ for every $T>0$. \\
{\bf Step 2:} we establish the bound 
\be \label{e:xienne}
      0 \leq \xi_{n+1}(t, x) \leq 1. 
\eq
We have already established the positivity of $\xi_{n+1}$ at the previous step. To establish the second inequality in~\eqref{e:xienne}, we compute the material derivative of $\xi_{n+1}$ by direct computations, and arrive at 
\be \label{e:xit}
     \partial_t \xi_{n+1} + V(\xi_{n+1}, u_n) \partial_x \xi_{n+1} = \int_x^{+\infty} \eta'_{n+1}(x-y) 
     \left[ V(\xi_{n+1}, u_n) (t, x) - V(\xi_{n+1}, u_n) (t, y) \right] \rho_{n+1} (t, y) dy, 
\eq 
which in particular implies that $\xi_{n+1}$ is Lipschitz continuous with respect to the $t$ variable as well.
Next, we recall that, for every $t\ge 0$, the function $\rho_{n+1}(t, \cdot)$ is compactly supported, which owing to Lebesgues's Dominated Convergence Theorem implies that $\xi_{n+1}(t, \cdot)$ vanishes at both $\pm \infty$. We conclude that, for every $t \ge 0$, the function  $\xi_{n+1}(t, \cdot)$ has a  maximum, and we set
\be \label{e:gi}
    g(t) : = \max_{x \in \R }  \xi_{n+1}(t, x),
\eq
which is a Lipschitz continous function being the supremum of Lipschitz continuous functions. Since $0 \leq \xi_{n+1}(0, \cdot) \leq 1$ owing to~\eqref{e:id}, to establish~\eqref{e:xienne} it suffices to show that $g' (t_\ast) \leq 0$ for a.e $t_\ast \in \R_-$ such that $g(t_\ast) \ge 1$. 

Towards this end, we recall the following elementary property: \emph{assume that $I \subseteq \R$ is an interval, and $f, g: I \to \R$ are two locally Lipschitz continuous functions, $f \leq g$. If both $f$ and $g$ are differentiable at $\tau \in I$ and $f(\tau) = g(\tau)$ then 
$f'(\tau) = g'(\tau)$.} To establish this property it suffices to set $h := g-f \ge 0$ and point out that if $h(\tau) =0$ then $\tau$ is a point of minimum and hence the differentiability of $h$ at $\tau$ yields $h'(\tau)=0$. 

Let us go back to the function $g$ defined by  
\eqref{e:gi}, fix a point of differentiability $t_\ast$ such that $g(t_\ast)\ge1$ and $x_\ast$ such that $\xi_{n+1}(t_\ast, x_\ast)= g(t_\ast)$. We denote by $a^\ast \in \R$ the point satisfying $X_{n+1}(t_\ast, a_\ast) = x_\ast$ and we set 
$$
    f(t) = \xi_{n+1} (t, X_{n+1}(t, a_\ast)).
$$
Owing to the elementary property we recalled above, to show that $g'(t_\ast) \leq 0$ it suffices to show that $f'(t_\ast) \leq 0$. This follows by combining~\eqref{e:xit} and~\eqref{e:V} with $\eta' \ge 0$. Note in particular that~\eqref{e:V} dictates that $V \ge 0$ and that $V(\xi, w) =0$ for every $\xi \ge 1$. \\
{\bf Step 3:} we now establish the bound 
\be \label{e:bdenne}
      \rho_{n+1} (t, x) \leq   \frac{1}{1- t C(V) \| z_0 \|_{L^\infty}} \quad \text{for every $(t, x) \in ]0, T[ \times \R$}, \; T = C(V) \| z_0 \|_{L^\infty}. 
\eq
Towards this end, we argue by induction. The above estimate holds for $n=1$. We assume that it is verified by $\rho_n$ and establish it for $\rho_{n+1}$. 
We recall that $\rho_{n+1}$ is continuously differentiable and compute the material derivative of $\rho_{n+1}$ as
\begin{equation*} 
\begin{split}
      \partial_t \rho_{n+1}   +  V(\xi_{n+1}, u_n) \partial_x \rho_{n+1} & 
=
      - \rho_{n+1} \partial_x [V(\xi_{n+1}, u_n) ]= 
     -  \rho_{n+1} \partial_1 V \partial_x \xi_{n+1} - \rho_{n+1}\partial_2 V \partial_x u_n \\
    & \stackrel{\eqref{e:passoenne}}{=}
     -  \rho_{n+1} \partial_1 V \partial_x \xi_{n+1} - \rho_{n+1}\partial_2 V \rho_n z_n 
\end{split}
\end{equation*} 
Next, we point out that, if $x_\ast$ is a point where the maximum of $\rho_{n+1}(t_\ast, \cdot)$ is attained then 
$$
    \partial_x \xi_{n+1} 
    (t_\ast, x_\ast) = - \rho_{n+1} (t_\ast, x_\ast) \eta_{n+1}(0) + \int_{x_\ast}^{+\infty} 
     \eta_{n+1}'(x_\ast-y)   \rho_{n+1}(t_\ast, y) dy \le 0 
$$
and by recalling that $\partial_1 V \leq 0$ and $\rho_{n+1} \ge 0$ this implies that the material derivative evaluated at 
$(t_\ast, x_\ast)$ satisfies 
\begin{equation} \label{e:inplace}
\begin{split}
\Big[\partial_t \rho_{n+1}   +  V(\xi_{n+1}, u_n)&  \partial_x \rho_{n+1}\Big]
     (t_\ast, x_\ast)  \leq - \rho_{n+1} 
    \partial_2 V \rho_n z_n (t_\ast, x_\ast)
    \leq C(V) \rho_{n+1} 
   | \rho_n z_n| (t_\ast, x_\ast) \\ & 
    \stackrel{\eqref{e:passoenne}}{\leq}
     C(V) \| z_0 \|_{L^\infty} \rho_{n+1} 
    \rho_n (t_\ast, x_\ast)
    \stackrel{\text{induction}}{\leq}  \frac{C(V) \| z_0 \|_{L^\infty}}{1- t C(V) \| z_0 \|_{L^\infty}}  
    \rho_{n+1} (t_\ast, x_\ast),
\end{split}
\end{equation}
where in the last inequality we have used the induction assumpion that~\eqref{e:bdenne} is satisfied by $\rho_n$.
With~\eqref{e:inplace} in place we can  basically argue as in {Step 2}: 
we recall that $\rho_{n+1}(t, \cdot)$ is a continuous and compactly supported function
for every $t$, set
\be \label{e:gienne}
    g_{n+1}(t) : = \max_{x \in \R} \rho_{n+1} (t, \cdot),
\eq
fix a point $t_\ast$ at which $g_{n+1}$ is differentiable and $x_\ast \in \R$ such that $g(t_\ast) = \rho_{n+1} (t_\ast, x_\ast).$ Next, we set 
$$
    f(t) : = \rho_{n+1} (t, X_{n+1} (t, a_\ast)),
$$
where $a_\ast$ satisfies $X_{n+1} (t_\ast, a_\ast)= x_\ast$. By combining the elementary property recalled at the previous step with~\eqref{e:inplace}
we conclude that 
$$
    g_{n+1}'(t_\ast) \leq \frac{C(V) \| z_0 \|_{L^\infty}}{1- t C(V) \| z_0 \|_{L^\infty}}  
    g_{n+1}(t_\ast), 
$$
and by the arbitrariness of $t_\ast$ and by using the Comparison Theorem for ODEs we arrive at~\eqref{e:bdenne}, which concludes our induction argument.   \\
{\bf Step 4:} we now define  $z_{n+1}$ and $u_{n+1}$. First, we recall~\eqref{e:icsenne}, set $Y_{n+1}(t, \cdot) : = X_{n+1}(t, \cdot)^{-1}$ and 
$
    z_{n+1} (t, x) : = z_{0n+1} (Y_{n+1}(t, x)),
$
which immediately yieds the inequality
\be \label{e:glicine}
    \| z_{n+1} \|_{L^\infty} \leq  \| z_{0} \|_{L^\infty}. 
\eq
Being the vector field $V(\xi_{n+1},u_n)$  continuously differentiable, so is the inverse flow $Y_{n+1}(t, \cdot)$ and hence $z_{n+1}$.
Note furthermore that 
\be \label{e:cammello}
   \partial_t [\rho_{n+1} z_{n+1}] + \partial_x [V(\xi_{n+1}, u_n ) \rho_{n+1} z_{n+1} ]=0
\eq
in the sense of distribution. Albeit well known, we provide a proof of~\eqref{e:cammello} for the sake of completeness. It suffices to test against test functions in the form $\phi(t) \nu (x)$, with $\phi \in C^\infty_c (]0, + \infty[)$ and $\nu \in C^\infty_c (\R)$. Owing to the identity  $\rho_{n+1}(t, \cdot) \mathcal L^1 = X_{n+1}(t, \cdot)_\sharp \left(\rho_{n+1}(0, \cdot) \mathcal L^1\right)$ we  have 
$$
   \int_{\R} \rho_{n+1}z_{n+1} (t, x)  \nu (x) dx  = 
    \int_{\R} \rho_{n+1} (t, x) z_0 (Y(t, x)) \nu (x)  dx
   =
    \int_{\R} \rho_{0 n+1}(y)  z_0 (y) \nu (X_{n+1}(t, y)) dy,
$$
which in turn implies owing to an integration by parts in the $t$ variable and recalling~\eqref{e:icsenne}
\[
\begin{split}
   \int_0^\infty & \partial_t \phi (t)  \int_{\R} \rho_{n+1}z_{n+1} (t, x)  \nu (x) dx dt =
    \int_0^\infty \partial_t \phi (t) \int_{\R} \rho_{0 n+1}(y)  z_0 (y) \nu (X_{n+1}(t, y)) dy dt 
    \\ & = - \int_0^\infty \phi(t) \int_{\R} \rho_{0 n+1}(y)  z_0 (y)   V(\xi_{n+1}, u_n) \partial_x \nu (X_{n+1}(t, y)) dy dt 
    \\ & = -
    \int_0^\infty \phi (t)  \int_{\R} \rho_{n+1}z_{n+1}  V(\xi_{n+1}, u_n) (t, x)    \partial_x \nu (x) dx dt,
\end{split}
\]
and by the arbitrariness of $\phi$ and $\nu$ this establishes~\eqref{e:cammello}. 
We now set
\be \label{e:uenne+1}
    u_{n+1} (t, x) = u_\infty + \int_{- \infty}^x \rho_{n+1} z_{n+1} (t, y) dy, 
\eq
where $u_\infty$ is the same as in~\eqref{e:uinfty}. Owing to~\eqref{e:cammello}, the continuously differentiable function $u_{n+1}$ satisfies
\be \label{e:te}
     \partial_t u_{n+1} + V(\xi_{n+1}, u_n) \partial_x u_{n+1} =0
\eq
on $\R_+ \times \R$. Owing to the classical method of characteristics, we get the bound
\be \label{e:uenne}
   \inf u_{0n+1} \leq u_{n+1}(t, x) \leq \sup u_{0n+1} \quad \text{for every $(t, x) \in \R_+ \times \R$},
\eq
where 
$
    u_{0 n+1} (x) 
$
is the same as in~\eqref{e:koala}. Moreover, since $\rho_{n+1}, z_{n+1}$ are continuously differentiable functions, then $u_{n+1}$ is of class $C^2$.  \\
{\bf Step 5:}  to conclude the definition of the approximate sequences, we are left to discuss the case where we have~\eqref{e:idgarz}.
 First, we fix a sequence of continuous functions $\psi_{0n}$ with $\|\psi_{0n}\|_{L^\infty}\le \|\psi_{0}\|_{L^\infty}$ and converging pointwise a.e. to $\psi_0$. Next, we point out that~\eqref{e:idgarz} implies that the function $z_0' \in L^1(\R)$, which in turn implies that
\be \label{e:cervo}
   z_\infty = \lim_{x \to - \infty} z_0(x)
\eq
exists and is finite. Next, we set $\psi_{n+1}(t, x): = \psi_0 (Y_{n+1}(t, x))$, which implies that $\psi_{n+1}$ is a continuous function, and 
\be \label{e:prugna}
     z_{n+1} (t, x) : = z_\infty + \int_{- \infty}^x \rho_{n+1} \psi_{n+1} (t, y) dy.
\eq
Notice that the functions $z_{n+1}$ are Lipschitz continuous in the space variable $x$ uniformly with respect to $n$ for $t<T$, provided $T$ is the same as in~\eqref{e:bdenne}.
Note that in this case $z_{n+1}$ is not the same as in the previous cases, but this is irrelevant since they converge to the same limit: in particular the initial datum $z_{n+1}(0, \cdot)$ converges to $z_0$ in $C^0 (\R)$ owing to~\eqref{e:rho0enne} and~\eqref{e:prugna}. Note furthermore that by arguing as at  Step 4 we infer that the continuously differentiable function $z_{n+1}$ satisfies $\partial_t z_{n+1} + V(\xi_{n+1}, u_n) \partial_x z_{n+1}=0$ and by using the classical method of characteristics this in turn implies that  $\| z_{n+1} \|_{L^\infty} \leq 
\| z_{n+1} (0, \cdot) \|_{L^\infty}$, which converges to $\| z_0 \|_{L^\infty}$ as $n$ goes to $+ \infty$. 
We then define $u_{n+1}$ as before by using~\eqref{e:uenne+1}. \\
{\bf Step 6:} we show that, if $\rho_0 \in W^{1, \infty}(\R)$, then  
\be \label{e:estder}
   \| \partial_x \rho_{n+1} (t, \cdot) \|_{L^\infty},  \| \partial_{xx} u_{n} (t, \cdot) \|_{L^\infty}  \leq 
   C(t, \| \rho'_0\|_{L^\infty}, t, \eta, V, \bar t, \| \psi_0 \|_{L^\infty},  z_\infty, \| \rho_0 \|_{L^1}), \quad 
   \text{for every $t \in ]0, T[$},
\eq
where $T$ is the same as in~\eqref{e:bdenne}. First, we recall the representation formula 
{\color{black}
$$
   \rho_{n+1}(t, X_{n+1}(t, x))= \rho_{0n+1}(x) \exp \left[ \int_0^t \partial_x  V(\xi_{n+1}, u_n )(s, X_{n+1}(s, x))ds  \right], 
$$
}
which yields 
{\color{black}
\be \label{e:derivatona}
\begin{split}
   \partial_x &\rho_{n+1}(t, X_{n+1}(t, x)) \frac{\partial X_{n+1}} {\partial x} (t,x)  = \rho_{0n+1}'(x)  \exp \left[ \int_0^t \partial_x  V(\xi_{n+1}, u_n )(s, X_{n+1}(s, x))ds  \right] \\ 
   &  + 
   \rho_{0n+1}(x)  \exp \left[ \int_0^t \partial_x  V(\xi_{n+1}, u_n )(s, X_{n+1}(s, x))ds  \right] \int_0^t \partial_{xx}  V(\xi_{n+1}, u_n )(s, X_{n+1}(s, x)) \frac{\partial X_{n+1}} {\partial x}(s,x)  ds.
\end{split}
\eq
}
Since 
{\color{black}
\be \label{e:derflusso}
    \frac{\partial X_{n+1}} {\partial x} (t, x) = \exp \left[ \int_0^t \partial_x  V(\xi_{n+1}, u_n )(s, X(s, x))ds  \right]
\eq}
from~\eqref{e:derivatona} we get 
{\color{black}
\be \label{e:derivatona2}
   \partial_x \rho_{n+1}  (t, X_{n+1}(t, x)) = \rho_{0n+1}' (x)  +   \rho_{0n+1} (x) \int_0^t \partial_{xx}  V(\xi_{n+1}, u_n ) (s, X_{n+1}(s, x)) \frac{\partial X_{n+1}} {\partial x}(s,x) ds.
\eq}
We now recall that $\rho_{n+1}(t, \cdot)$ is a continous and compactly supported function, set 
$$
    \ell_{n+1}(t): = \max_{x \in \R} | \partial_x \rho_{n+1} (t, x)|
$$
and use~\eqref{e:derivatona2} to control it. First of all we point out that, if $\rho_0 \in W^{1 \infty} (\R)$, we can construct the approximating sequence in~\eqref{e:rho0enne} in such a way that $\| \rho'_{0n} \|_{L^\infty} \leq \| \rho_0'\|_{L^\infty}$. Next, we point out that 
\be \label{e:natale2}
|\partial_x V(\xi_{n+1},u_n)| =   |\partial_1 V(\xi_{n+1},u_n) \partial_x \xi_{n+1} + \partial_2 V(\xi_{n+1},u_n) \partial_x u_n| \eq
and that
\be \label{e:natale}
\begin{split}
|\partial_{xx} V(\xi_{n+1},u_n)|=  &~ \big |\partial_{11} V(\xi_{n+1},u_n) (\partial_x \xi_{n+1})^2 + 2 \partial_{12} V(\xi_{n+1},u_n) \partial_x \xi_{n+1}\partial_x u_n \\
&~ + \partial_1 V(\xi_{n+1},u_n) \partial_{xx}\xi_{n+1} + \partial_{22} V(\xi_{n+1},u_n) (\partial_xu_n)^2 + \partial_2 V(\xi_{n+1},u_n) \partial_{xx}u_n|.
\end{split}
\eq
Note that 
\be \label{e:pasqua}\begin{split}
   |\partial_x \xi_{n+1}(t, x) | & = \left| - \rho_{n+1} (t, x) \eta_{n+1}(0) + \int_{x}^{+\infty} 
     \eta_{n+1}'(x-y)   \rho_{n+1}(t, y) dy \right| \leq C(\eta) \| \rho_{n+1} (t, \cdot) \|_{L^\infty}  \\ &
     \stackrel{\eqref{e:bdenne}}{\leq} 
     C(\eta, V, \bar t, \| z_{0n} \|_{L^\infty}) 
     \end{split}
     \eq
provided $t \leq \bar t < T$ and $T$ is the same as in~\eqref{e:bdenne}.  Similarly, $\| \partial_x u_n \|_{L^\infty} = \| \rho_n z_n \|_{L^\infty} \leq  C(\eta, \bar t, V, \| z_{0n} \|_{L^\infty})$. Also, 
\be \label{e:pasquetta}
\begin{split}
|\partial_{xx}\xi_{n+1}|(t,x) = &~  \left| \partial_x \left(-\eta_{n+1}(0)\rho_{n+1}(t,x) + \int_{-\infty}^0\eta_{n+1}'(y)\rho_{n+1}(t,x-y) dy  \right)   \right| \\
\le &~ 2 \|\partial_x \rho_{n+1}(t)\|_{L^\infty}\|\eta_{n+1}'\|_{L^1(\R^-)} 
\le~ C(\eta) \ell_{n+1}(t)
\end{split}
\eq
and 
\be \label{e:1maggio}
   |\partial_{xx} u_n (t, x)| \leq | \partial_x \rho_n z_n + \rho^2_{n} \psi_n| \stackrel{\eqref{e:bdenne}}{\leq} \|z_{0n} \|_{L^\infty} \ell_n + C(V, \|z_{0n}\|_{L^\infty},V,  \| \psi_0 \|_{L^\infty}).
\eq
We now plug~\eqref{e:pasqua},\eqref{e:pasquetta} and~\eqref{e:1maggio} into~\eqref{e:natale} and use it to control~\eqref{e:derivatona}, noting  that owing to~\eqref{e:derflusso},\eqref{e:natale2} and~\eqref{e:pasqua} we have $|\sfrac{\partial X_{n+1}}{\partial x}| \leq C(\eta, V, \bar t, \| z_{0n} \|_{L^\infty})$. We eventually arrive at 
\be \label{e:carnevale}
\begin{split}
   \ell_{n+1} (t) & \leq \| \rho_0' \|_{L^\infty} + C(\eta, V, \bar t, \| z_{0n }\|_{L^\infty}) \int_0^t [ \ell_{n}(s) + \ell_{n+1} (s) + 1 ] ds 
   \\ & \stackrel{\eqref{e:prugna}}{\leq} 
    \| \rho_0' \|_{L^\infty} + \underbrace{C(\eta, V, \bar t, \| \psi_0 \|_{L^\infty},  z_\infty, \| \rho_0 \|_{L^1})}_{:= \kappa}\int_0^t [ \ell_{n}(s) + \ell_{n+1} (s) + 1 ] ds, \quad \text{for every $t \leq \bar t$}
 \end{split}
\eq
We now argue by induction to show that~\eqref{e:carnevale} implies 
\be \label{e:25aprile}
   \ell_{n} (t) \leq \frac12 [1 + 2\| \rho_0'\|_{L^\infty}]\exp [2\kappa t] -\frac12 \quad \text{for every $n$}, 
\eq
which yields the control on $\partial_x \rho_{n+1}$ and, by using~\eqref{e:1maggio}, on $\partial_{xx} u_n$ in~\eqref{e:estder}.
The inequality in~\eqref{e:25aprile} is satisfied for $n=0$ because $\ell_0(t)= \| \rho_{0n}' \|_{L^\infty}.$ To establish the induction step, we term $p$ the solution of the ODE $p'= \kappa [1 + {\color{black}2}p]$ attaining value $ \| \rho_0'\|_{L^\infty}$ at $t=0$. Note that 
$$
   p(t) =  \| \rho_0' \|_{L^\infty} +\kappa \int_0^t [ {\color{black}2} p(s) + 1 ] ds
$$
and that~\eqref{e:25aprile} (that is, the induction assumption) implies that $\ell_n(t) \leq p(t)$. By using~\eqref{e:carnevale} we arrive at 
$$
    \ell_{n+1}(t) - p(t) \leq \kappa \int_0^t [ \ell_{n+1}- p  ](s) ds,
$$
which yields $\ell_{n+1(t)} \leq p(t)$ and hence concludes the induction step.
\subsection{Passage to the limit}\label{sss:limite} 
We fix $\bar t < T$, where $T$ is the same as in~\eqref{e:bdenne},  and from~\eqref{e:pos} and~\eqref{e:bdenne} deduce that there is a subsequence $\{ \rho_{n_k} \}_{k \in \mathbb N}$ such that 
\be \label{e:convrho}
     \rho_{n_k} \weaks \rho \quad \text{weakly$^\ast$ in $L^\infty(]0, \bar t[ \times \R)$}
\eq
for some limit bounded function $\rho$. Since $\eta_{n_k} \to \eta$ in $L^1 (\R)$ owing to~\eqref{e:etaenne}, 
by combining~\eqref{e:xienne} with Lebesgue's Dominated Convergence Theorem we conclude that 
\be \label{e:convxi} 
       \xi_{n_k} \to \xi \quad \text{strongly in $L^1_{\mathrm{loc}}([0, \bar t[ \times \R)$},
\eq
provided $\xi$ is the same as in~\eqref{e:nlGARZ}. 
Next, we recall~\eqref{e:uenne} and the fact that $u_{0n}$ converges uniformly to $u_0$. We also combine the decomposition $\partial_x u_n = \rho_n z_n$ with the bounds~\eqref{e:bdenne} and $\| z_n \|_{L^\infty} \leq \| z_0 \|_{L^\infty}$, which yields a uniform bound on $\partial_x u_n$. Next, we
use~\eqref{e:te} to deduce a uniform bound on $\partial_t u_n$, and this finally yields the equicontinuity 
of the sequence $\{ u_n \}$. We apply the Arzel\`a Ascoli Theorem 
and conclude that there is a subsequence of $\{ u_{n_k} \}$ (which to simplify notation we do not relabel) such that 
\be \label{e:convu}
      u_{n_k} \to u \; \text{in $C^0(\Omega)$ for every $\Omega \subseteq [0, \tilde T] \times \R$ compact}
\eq
and 
$$
     \partial_t u_{n_k} \weaks \partial_t u, \quad \partial_x u_{n_k} \weaks \partial_x u 
      \; \text{weakly$^\ast$ in $L^\infty(]0, \tilde T[ \times \R)$},
$$
for some Lipschitz continuous function $u$. Assume for a moment that we have shown that 
\be \label{e:convu2}
      u_{n_k-1} \to u \; \text{in $C^0(\Omega)$ for every $\Omega \subseteq [0, \tilde T] \times \R$ compact},
\eq
where $u$ is the same as in~\eqref{e:convu}\footnote{Note indeed that by compactness $\{ u_{n_k-1}  \}_{k \in \mathbb N}$ has a converging subsequence, but its limit might in principle differ from $u$.}, which will be done in the next paragraph. 
Then by combining~\eqref{e:convu2} and~\eqref{e:convxi} 
we have $V(\xi_{n_k}, u_{n_k-1}) \to V(\xi, u)$ strongly in $L^1_{\mathrm{loc}}(]0, \tilde T[ \times \R)$. This implies that 
we can pass to the limit in the distributional formulation of~\eqref{e:cauenne} and in the 
pointwise formulation of~\eqref{e:te}, and obtain a solution of~\eqref{e:nlGARZ} satisfying~\eqref{e:localex},~\eqref{e:mpxi} and also the first inequality in~\eqref{e:mpii}.

To conclude the proof, we are left to establish the equality $\partial_x u = \rho z$ for some $z$ satisfying~\eqref{e:mpii}. Towards this end, we set 
\be \label{e:orso}
z(t, x) : = z_0 (Y(t, x)),
\eq
where $Y(y, \cdot) = X(t, \cdot)^{-1}$ and $X$ is defined as in~\eqref{e:icsenne} with $\xi_{n+1}$ and $u_n$ replaced by $\xi$ and $u$, respectively. Note that $V(\xi, u)$ is a regular vector field and hence the classical Cauchy Lipschitz Picard Lindelh\"of Theorem still applies. Note furthermore that $\| z \|_{L^\infty} \leq \| z_0 \|_{L^\infty}$. 
We now set 
$$
    w(t, x) =  u_{\infty} + \int_{- \infty}^x \rho z(t, y) dy,
 $$
where $u_\infty$ is the same as in~\eqref{e:uinfty}, and point out that $w(0, \cdot) = u_0$.  By arguing as in {Step 4} in \S\ref{sss:app} we obtain $\partial_t w + V(\xi, u) \partial_x w=0$. Since this transport equation has a unique solution satisfying $w(0, \cdot) = u_0$, by recalling the equation at the first line of~\eqref{e:GARZ} we conclude that $w=u$ and this establishes the identity $\partial_x u = \rho z$. 

We are now left to establish the regularity results. Towards this end, we assume~\eqref{e:idgarz} and set $\psi (t, x) : = \psi_0 (Y(t, x))$ and 
$$
    z(t, x) =  z_{\infty} + \int_{- \infty}^x \rho \psi (t, y) dy,
$$
where $z_{\infty}$ is the same as in~\eqref{e:prugna}, and by arguing as before this yields $\partial_x z = \rho \psi$, that is~\eqref{e:est_h}.  To establish~\eqref{e:est_rho}, we pass to the limit in~\eqref{e:estder} to control the space derivatives, then use the first equation in~\eqref{e:nlGARZ} to control $\partial_t \rho$ and the derivative of  second equation in~\eqref{e:nlGARZ} to control $\partial_{tx} u$.
This concludes the proof of the existence part of Theorem~\ref{t:wpnl}. 
\subsection{Proof of~\eqref{e:convu2}} \label{sss:pconvu2}
We use an argument inspired by~\cite{ColomboCrippaSpinolo2} and introduce the functional 
\be \label{e:quenne}
   Q_n(t) : = \int_{\R} \rho_0( x) |X_{n+1}(t, x) - X_n(t, x)| dx,
\eq
where $X_{n+1}$ and $X_n$ are defined as in~\eqref{e:icsenne}. We point out that~\eqref{e:eta} implies that $\eta$ is a bounded variation function and state the following result. 
\begin{lemma}\label{l:quenne}
There is $\delta>0$, only depending on depending on the Lipschitz constant of $V$,  $\| z_0 \|_{L^\infty}$ and $\mathrm{Tot Var} \ \eta$ and on the expression at the right had side of~\eqref{e:bdenne},
such that 
\be\label{e:vquenne}
   \sup_{t \in [0, \delta] }Q_n (t ) \to 0 \quad \text{as $n \to +\infty$}.
\eq 
\end{lemma}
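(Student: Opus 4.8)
The plan is to derive a Grönwall-type differential inequality for $Q_n(t)$, with a crucial extra error term coming from the discrepancy between the nonlocal velocities $V(\xi_{n+1},u_n)$ and $V(\xi_n,u_{n-1})$, and then to show this extra term is controlled by $Q_{n-1}$ up to a small factor, so that iterating the bound on a short time interval $[0,\delta]$ produces a geometric contraction. First I would differentiate inside the integral in~\eqref{e:quenne}: since $X_{n+1}$ and $X_n$ solve~\eqref{e:icsenne} with velocities $V(\xi_{n+1},u_n)$ and $V(\xi_n,u_{n-1})$ respectively, for a.e.\ $t$ one gets
\[
   \frac{d}{dt} Q_n(t) \leq \int_\R \rho_0(x) \big| V(\xi_{n+1},u_n)(t,X_{n+1}) - V(\xi_n,u_{n-1})(t,X_n) \big|\, dx .
\]
I would split the integrand via the triangle inequality into $|V(\xi_{n+1},u_n)(t,X_{n+1}) - V(\xi_{n+1},u_n)(t,X_n)|$, which by the Lipschitz bound on $V$ (the space-Lipschitz constant being controlled by $C(V)$ together with the bounds on $\partial_x\xi_{n+1}$ from~\eqref{e:pasqua} and on $\partial_x u_n = \rho_n z_n$) is bounded by $L\,|X_{n+1}(t,x)-X_n(t,x)|$, and the genuinely nonlocal remainder $|V(\xi_{n+1},u_n)(t,X_n) - V(\xi_n,u_{n-1})(t,X_n)|$. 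The first piece, integrated against $\rho_0\,dx$, is exactly $L\,Q_n(t)$, which is the Grönwall part.

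The heart of the matter is the second piece. Using~\eqref{e:V} one bounds it by $C(V)\big( |\xi_{n+1}-\xi_n|(t,X_n) + |u_n-u_{n-1}|(t,X_n) \big)$. For the $u$-difference I would use that $u_n,u_{n-1}$ are built from flows (Step 4), so $|u_n - u_{n-1}|$ is controlled in sup norm by $\int_0^t$ of a velocity difference of the same type evaluated along characteristics, i.e.\ ultimately by $\int_0^t \!\!\big(\|\xi_{n}-\xi_{n-1}\|_\infty + \|u_{n-1}-u_{n-2}\|_\infty\big)$; but since on a short interval these sup-norm differences are themselves small, the cleanest route is to absorb $\|u_n-u_{n-1}\|_{C^0}$ into a quantity that tends to $0$ as $n\to\infty$ by a separate (easier) induction, or — better — to note that the whole lemma is only needed to prove~\eqref{e:convu2}, so I may run a simultaneous argument bounding $\sup_{[0,\delta]}\|u_n-u_{n-1}\|_{C^0}$ and $\sup_{[0,\delta]}Q_n$ together. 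For the $\xi$-difference, here is where $\mathrm{TotVar}\,\eta$ enters: writing $\xi_{n+1}(t,X_n) - \xi_n(t,X_n)$ using the convolution definition and the representation $\rho_m(t,\cdot)\mathcal L^1 = X_m(t,\cdot)_\sharp(\rho_{0m}\mathcal L^1)$, one gets
\[
   \xi_{n+1}(t,z) = \int_\R \rho_{0,n+1}(x)\, \eta_{n+1}\!\big(X_{n+1}(t,x)-z\big)\mathbbm 1_{\{X_{n+1}(t,x)>z\}}\, dx,
\]
and similarly for $\xi_n$; subtracting, the difference of the kernels evaluated at displaced points is estimated by $\mathrm{TotVar}\,\eta$ times $|X_{n+1}(t,x)-X_n(t,x)|$ (plus an $L^1$ error $\|\eta_{n+1}-\eta_n\|_{L^1}\to 0$ and an error from $\rho_{0,n+1}\ne\rho_{0,n}$, both vanishing), which after integrating against $\rho_0\,dx$ is precisely $C(\mathrm{TotVar}\,\eta)\,Q_n(t)$ — again a Grönwall term — while the part where the indicator sets differ contributes $O(1)\cdot\|\eta\|_\infty\cdot(\text{measure of symmetric difference})$, also controlled by $Q_n$.

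Collecting everything yields, for $t\le\delta$,
\[
   Q_n(t) \leq A \int_0^t Q_n(s)\, ds + B \int_0^t \sup_{[0,s]}\big(Q_{n-1} + \|u_{n-1}-u_{n-2}\|_{C^0}\big)\, ds + \omega_n,
\]
with $A,B$ depending only on the Lipschitz constant of $V$, $\|z_0\|_{L^\infty}$, $\mathrm{TotVar}\,\eta$ and the right-hand side of~\eqref{e:bdenne}, and $\omega_n\to 0$ coming from the approximation errors in $\rho_{0n},\eta_n$. Applying Grönwall and choosing $\delta$ so small that $B\delta\, e^{A\delta}\le \tfrac12$, I obtain $\sup_{[0,\delta]}Q_n \le \tfrac12 \sup_{[0,\delta]}(Q_{n-1}+\|u_{n-1}-u_{n-2}\|_{C^0}) + C\omega_n$, and a parallel inequality for $\|u_n - u_{n-1}\|_{C^0}$; combining the two, the pair $\max\{\sup_{[0,\delta]}Q_n,\ \sup_{[0,\delta]}\|u_n-u_{n-1}\|_{C^0}\}$ satisfies a contraction-with-vanishing-source recursion, hence tends to $0$, which is~\eqref{e:vquenne}. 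The main obstacle I anticipate is precisely the bookkeeping in the $\xi$-difference estimate: one must be careful that translating the bound on $|\eta(a)-\eta(b)|$ by the total variation of $\eta$ is legitimate when the arguments are themselves transported by two different flows, and that the "indicator mismatch" term (coming from the look-ahead truncation $\mathbbm 1_{\R_-}$) is genuinely of order $Q_n$ rather than merely of order $\|X_{n+1}-X_n\|_\infty$; handling this cleanly may require writing $\xi_m$ as a Stieltjes integral against $d\eta$ and estimating the resulting double integral directly.
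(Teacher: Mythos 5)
Your overall architecture matches the paper's: differentiate $Q_n$, split $|V(\xi_{n+1},u_n)(t,X_{n+1})-V(\xi_n,u_{n-1})(t,X_n)|$ by the triangle inequality into a ``same-velocity, different-characteristic'' piece (which closes to $L\,Q_n$) and a ``same-point, different-velocity'' piece, and then estimate the latter. The $\xi$-difference estimate you sketch is also where the paper goes, and you are right to be nervous about it: the bound $|\eta(a)-\eta(b)|\le \mathrm{TotVar}\,\eta\cdot|a-b|$ is false pointwise for a merely BV kernel. The correct statement is the \emph{integrated} one, $\int_\R|\eta(a-y)-\eta(b-y)|\,dy\le \mathrm{TotVar}\,\eta\cdot|a-b|$, and the paper exploits it by pushing both densities forward to the reference variable and interchanging integrals (the estimate of $I_{11}$ and $I_{21}$ in \S\ref{ss:proofl21}), which is essentially the Stieltjes/double-integral route you suggest at the end. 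So that part of your plan, though still informal, is pointing in the right direction.

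The real gap is in the treatment of the $u$-difference term $|u_n(t,X_n)-u_{n-1}(t,X_n)|$. You propose to run a simultaneous contraction on $\sup_{[0,\delta]}\|u_n-u_{n-1}\|_{C^0}$ and $\sup_{[0,\delta]}Q_n$, asserting a ``parallel inequality'' for $\|u_n-u_{n-1}\|_{C^0}$. Such an inequality is not available from the ingredients you use: $Q_{n-1}$ controls the characteristic discrepancy only in a $\rho_0$-weighted $L^1$ sense, whereas a bound on $\|u_n-u_{n-1}\|_{C^0}$ would require a \emph{uniform} (sup-norm) control of $|X_n-X_{n-1}|$, or at least of $\int_{Y_{n-1}(t,z)}^{Y_n(t,z)}\rho_0\,dw$ uniformly in $z$, neither of which follows from $Q_{n-1}$ being small (characteristics can separate substantially on sets where $\rho_0$ vanishes or is small, without $Q_{n-1}$ seeing it). The paper avoids this entirely with a much simpler device, the characteristic identity~\eqref{e:lungochar}: since $u_n(t,X_n(t,x))=u_0(x)=u_{n-1}(t,X_{n-1}(t,x))$, one writes
\[
|u_n(t,X_n)-u_{n-1}(t,X_n)|\le \underbrace{|u_n(t,X_n)-u_{n-1}(t,X_{n-1})|}_{=0}+|u_{n-1}(t,X_{n-1})-u_{n-1}(t,X_n)|\le \|\partial_x u_{n-1}\|_{L^\infty}|X_{n-1}-X_n|,
\]
which, after multiplying by $\rho_0(x)$ and integrating, gives exactly $\|\rho_{n-1}\|_{L^\infty}\|z_0\|_{L^\infty} Q_{n-1}(t)$. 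This closes the recursion purely in terms of $Q_n$ and $Q_{n-1}$, with no auxiliary $C^0$ quantity, and makes the contraction $\sup_{[0,\delta]}Q_n\le (e^{K\delta}-1)\sup_{[0,\delta]}Q_{n-1}$ immediate. Without this trick (or an explicit, correct substitute for the ``parallel inequality''), your recursion does not close.
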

We postpone the proof of Lemma~\ref{l:quenne} to the next paragraph and we show that~\eqref{e:vquenne} implies~\eqref{e:convu2}. Up to extracting a further subsequence (which to simplify notation we do not relabel) we can assume that  
\be \label{e:convu3}
      u_{n_k-1} \to v \; \text{in $C^0(\Omega)$ for every $\Omega \subseteq [0, \tilde T] \times \R$ compact},
\eq
for some continuous limit function $v$. Establishing~\eqref{e:convu2} amounts to show that $u=v$. We fix $x \in \R$ and recall that $\{ X_n (\cdot, x)\}$ is defined as in~\eqref{e:icsenne} and also recall from {Step 1} in \S\ref{sss:app} that the Lipschitz constant on $\{ X_n (\cdot, x) \}$ is uniformly bounded. We apply the Arzel\`a Ascoli Theorem and conclude that we can extract a further subsequence (which we do relabel to simplify the notation, and which in principle depends on $x$) such that 
$$
   X_{n_k} (\cdot, x) \to X(\cdot, x), \quad X_{n_k-1} (\cdot, x) \to Z(\cdot, x)
$$
for some limit functions $X( \cdot, x)$ and $Z(\cdot, x)$ that solve 
\be \label{e:ics}
     \left\{
      \begin{array}{ll}
                  \displaystyle{\frac{d X}{dt}} = V(\xi, u)(t, X)  \\
                  \phantom{cc}\\
                   X (0, x) =x, \\
      \end{array}
     \right. \qquad 
     \left\{
      \begin{array}{ll}
                  \displaystyle{\frac{d Z}{dt}} = V(\xi, v)(t, Z)  \\
                  \phantom{cc}\\
                   Z (0, x) =x, \\
      \end{array}
     \right. 
\eq
respectively. By the Lipschitz continuity of $V(\xi, u)$ and $V(\xi, v)$, each of the above Cauchy problems has a unique solution. This implies that the \emph{whole} sequences $\{ X_{n_k} (\cdot, x)\}$ and $\{ X_{n_k-1} (\cdot, x)\}$ converge, with no need to pass to subsequences. In particular, the convergence holds \emph{for every $x$}. By using the Dominated Convergence Theorem we then conclude that 
\be \label{e:vquenne2}
   \int_\R \rho_0( x) |X(t, x) - Z(t, x)| dx = \lim_{k \to + \infty} Q_{n_k}(t) \stackrel{\eqref{e:vquenne}}{=}0, \quad \text{for every $t \in [0, \delta]$}.
\eq
We now fix $t \in [0, \delta]$ and by passing to the limit in the identities $u_{n_k}(t, X_{n_k}(t, x))=u_0(x) = u_{n_k-1}(t, X_{n_k-1}(t, x))$ we arrive at
$
    u(t, X(t, x))= u_0(x) = v (t, Z(t, x))),
$ which can be rewritten as  
\be \label{e:uuu2}
   u (t, z) = u_0 (Y_1 (t, z)), \quad v (t, z) = u_0 (Y_2 (t, z)).
\eq
provided $Y_1(t, \cdot)$ and $Y_2(t, \cdot)$ are  the inverse functions of $X(t, \cdot)$ and $Z (t, \cdot)$, respectively. Let us now fix $z \in \R$: if $Y_1 (t, z) = Y_2 (t, z)$ then $u(t, z) = v (t, z)$, so in the following we focus on the case $Y_1 (t, z) \neq Y_2 (t, z)$ and just to fix the ideas we assume $Y_1 (t, z) < Y_2 (t, z)$. 
Assume for a moment that we have proved that 
\be \label{e:rho002}
    \rho_0 (x) =0 \quad \text{a.e. on $[Y_1 (t, z), Y_2 (t, z)]$}, 
\eq
then owing to~\eqref{e:id2} we have $u'_0 \equiv 0 $ a.e. on $[Y_1 (t, z), Y_2 (t, z)]$ and hence $u_0 (Y_1 (t, z)) = u_0 (Y_2 (t, z))$, which owing to~\eqref{e:uuu2} implies $u(t, z) = v (t, z)$. To establish~\eqref{e:rho002} we argue by contradiction and assume that the set of points $x \in ]Y_1 (t, z), Y_2 (t, z)[$ such that $\rho_0(x) >0$ has positive measure. Owing to~\eqref{e:vquenne2}, this implies that there is  $x_0 \in ]Y_1 (t, z), Y_2 (t, z)[$ with $X (t, x_0) = Z (t, x_0)$. Since the maps $x \mapsto X(t, x)$ and $x \mapsto Y(t, x)$ are both strictly monotone, we have 
$$
    z = X(t, Y_1 (t, z)) < X (t, x_0), \quad  Z (t, x_0) <  Z(t, Y_2 (t, z)) = z, 
$$
which contradicts the equality $X (t, x_0) = Z (t, x_0)$ and hence establishes~\eqref{e:rho002}.  This in turn establishes the identity $u =v$ and hence the local in time existence on the time interval $[0, \delta]$. To establish the existence on the whole time interval $]0, \bar t[$ we point out that the right-hand side of~\eqref{e:bdenne} is uniformly bounded on $]0, \bar t[$ and conclude that we can iterate the above argument. 
\subsection{Proof of Lemma~\ref{l:quenne}}
\label{ss:proofl21}
We have 
\begin{equation*}
\begin{split}
     \frac{d Q_n}{dt} &\stackrel{\eqref{e:icsenne}}{\leq} \int_{\R} |V (\xi_{n+1}, u_n)(t, X_{n+1}) - V(\xi_n, u_{n-1})(t, X_n) | \rho_0(x) dx \\
     & \leq  \underbrace{\int_{\R} |V (\xi_{n+1}, u_n)(t, X_{n+1}) - V (\xi_{n+1}, u_n)(t, X_{n}) | \rho_0(x)dx}_{:=I_1} \\ &  +
      \underbrace{\int_{\R} |V(\xi_{n+1}, u_n)(t, X_{n}) - V(\xi_n, u_{n-1})(t, X_n) | \rho_0(x) dx }_{:=I_2}
\end{split}
\end{equation*}
and 
$$
    I_1  \leq C(V) \underbrace{\int_{\R} |\xi_{n+1}(t, X_{n+1}) - \xi_{n+1}(t, X_{n}) | \rho_0(x) dx }_{:=I_{11}}+ 
   C(V)  \underbrace{\int_{\R} |u_n (t, X_{n+1}) - u_{n}(t, X_{n}) | \rho_0(x) dx}_{:=I_{12}}. 
$$
We have 
$$
  I_{12} \leq \| \partial_x u_n \|_{L^\infty} \int_{\R} |X_{n+1}(t, x) - X_n(t, x) | \rho_0(x)| dx \stackrel{\partial_x u_n = \rho_n z_n,\eqref{e:glicine}}{\leq} 
  \| \rho_n \|_{L^\infty} \| z_0 \|_{L^\infty} Q_n (t),
$$
and 
\begin{equation*}
\begin{split}
    I_{11}
   & =  \int_{\R}   \rho_0(x) \left| \int_{\R} [ \eta (X_{n+1}(t, x) - y ) - \eta (X_n(t, x) - y)] \rho_{n+1} (t, y)|  dy\right| dx\\
   &  
 \leq \| \rho_{n+1} \|_{L^\infty}  \mathrm{Tot Var}\ \eta \int_{\R}   \rho_0(x) |X_{n+1} (t, x)- X_n(t, x) |   dx 
     = \| \rho_{n+1} \|_{L^\infty} \mathrm{Tot Var} \ \eta \,Q_n(t) .
\end{split}
\end{equation*}
In the previous expression, we have used the characterization of bounded total variation functions through different quotients. To control $I_2$ we point out that 
$$
     I_2  \leq C(V) \underbrace{
    \int_{\R} |\xi_{n+1}(t, X_n) - \xi_n(t, X_n) | \rho_0(x) dx}_{:=I_{21}} + C(V) \underbrace{
    \int_{\R} |u_n(t, X_n) - u_{n-1}(t, X_n) | |\rho_0(x) dx}_{:=I_{22}}.
$$
To control $I_{21}$ we use the identity $\rho_n (t, \cdot) \mathcal L^1= X_n (t, \cdot)\# \left(\rho_0\mathcal L^1\right)$, namely
\be 
\label{e:pushf}
     \int_\R \rho_n(t, y) \varphi(y) dy =   \int_\R \rho_0(x) \varphi( X_n(t, x)) dx
     \quad \text{for every $\varphi \in C^0_b(\R)$ }
\eq
and point out that, since $\rho_n$ is bounded, by approximation the above identity holds for every $\varphi \in L^1(\R)$.
This yields \begin{equation*}
\begin{split}
    I_{21} & 
    = \int_\R \rho_0(x) \left|  \int_\R \eta (X_n(t, x)  - y) [ \rho_{n+1} - \rho_n](y) \right| dy  dx \\ &
     \stackrel{\eqref{e:pushf}}{=}
    \int_\R \rho_0(x) \left| \int_\R  \big[ \eta (X_n (t, x) - X_{n+1} (t, y))- \eta (X_n (t, x) - X_n (t, y)) \big] \rho_0(y) dy\right|     dx \\ 
    & \leq  \int_\R \rho_0(y) \int_\R  \left| \eta (X_n (t, x) - X_{n+1} (t, y))- \eta (X_n (t, x) - X_n (t, y)) \right| \rho_0(x) dx    dy 
    \\ &  \stackrel{\eqref{e:pushf}}{=} 
     \int_\R \rho_0(y) \int_\R  \left| \eta (x - X_{n+1} (t, y))- \eta (x - X_n (t, y)) \right| \rho_n(t, x) dx    dy \\
    & \leq  \| \rho_n \|_{L^\infty} \mathrm{Tot Var} \ \eta \,Q_n(t) .\phantom{\int}
\end{split}
\end{equation*}
To control $I_{22}$ we point out that 
\be \label{e:lungochar}
    u_n (t, X_n(t, x))= u_0(x) = u_{n+1} (t, X_{n+1}(t, x))
\eq 
and decompose $I_{22}$ as follows: 
\begin{equation*}
\begin{split}
    I_{22} = & 
     \int_{\R} |u_n(t, X_n) - u_{n-1}(t, X_n) | |\rho_0(x) dx \\ &  \leq \underbrace{\int_{\R} |u_n(t, X_n) - u_{n-1} (t, X_{n-1}) | |\rho_0(x) dx}_{=0 \; \text{by~\eqref{e:lungochar}}}+
     \int_{\R} |u_{n-1}(t, X_{n-1}) - u_{n-1}(t, X_n) | \rho_0(x) dx  \\
     & \leq \| \partial_x u_{n-1} \|_{L^\infty} \int_{\R} |X_n(t, x) - X_{n-1}(t, x) | \rho_0(x) dx \stackrel{\partial_x u_n = \rho_n z_n,\eqref{e:glicine}}{\leq}  \| \rho_{n-1} \|_{L^\infty} \| z_0 \|_{L^\infty}  Q_{n-1}(t).
\end{split}
\end{equation*}
We now control the $L^\infty$ norm of $\rho_n$ and $\rho_{n+1}$ through~\eqref{e:bdenne} and conclude that 
$$
    \frac{d Q_n}{dt} \leq K [Q_n(t) + Q_{n-1} (t)], 
$$
for a suitable constant $K$ only depending on the Lipschitz constant of $V$, $\| z_0 \|_{L^\infty}$, $\mathrm{Tot Var} \ \eta$ and on the expression at the right had side of~\eqref{e:bdenne} evaluated at $t=\tilde T$. 
By recalling that $Q_n(0) =0$ the above inequality implies 
$$
    \sup_{s \in [0, t]} Q_n (t) \leq [e^{K t}-1]  \sup_{s \in [0, t]} Q_{n-1} (t)
$$
and from there a classical induction argument yields~\eqref{e:vquenne} provided $|\exp(K \delta) -1|<1$. This concludes the Proof of Lemma~\ref{l:quenne}. 
\begin{remark}
\label{r:mp}
Assume that $z_0 \ge 0$ and recall the definition of $z_{n}$ given at the beginning of {Step 4} in \S\ref{sss:app}, which implies $z_{n} \ge0$. By plugging this inequality in~\eqref{e:inplace} and recalling that $\rho_n \ge 0$, $\rho_{n+1} \ge 0$, $\partial_2 V \ge 0$ we conclude that the material derivative evaluated at $(t_\ast, x_\ast)$ is non-positive, which in turns yields the maximum principle $0 \leq \rho \leq \essup \rho_0$. We can then iteratively apply Theorem~\ref{t:wpnl} and obtain a global-in-time existence result (we also obtain uniqueness, by Lemma~\ref{l:uni} in the following paragraph). 
\end{remark}
\subsection{Uniqueness}
\label{ss:uninl}
\begin{lemma}\label{l:uni}
Fix $\tau>0$ and assume that $(\rho_1, u_1)$ and $(\rho_2, u_2)$ are two solutions of~\eqref{e:nlGARZ},\eqref{e:id} belonging to the class $L^\infty(]0, \tau[ \times \R) \times L^\infty (]0, \tau[; W^{1, \infty} (\R))$. Then $\rho_1 = \rho_2$ and $u_1=u_2$. 
\end{lemma}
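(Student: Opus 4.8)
The plan is to prove uniqueness by a Gr\"onwall argument on the Lagrangian quantity measuring the discrepancy between the two solutions, in the same spirit as the functional $Q_n$ used in \S\ref{sss:pconvu2} and as in~\cite{ColomboCrippaSpinolo2}. First I would fix two solutions $(\rho_1,u_1)$ and $(\rho_2,u_2)$ as in the statement, and recall that, since each $u_i \in L^\infty(]0,\tau[;W^{1\infty}(\R))$, the velocity fields $V(\xi_i,u_i)$ are bounded and Lipschitz continuous in $x$ (with a Lipschitz constant controlled by $C(V)$, $\|\partial_x u_i\|_{L^\infty}$ and the total variation of $\eta$, via the convolution structure of $\xi_i$), so that the associated characteristic flows $X_i(t,\cdot)$ solving $\dot X_i = V(\xi_i,u_i)(t,X_i)$, $X_i(0,x)=x$, are well defined, invertible, and satisfy $\rho_i(t,\cdot)\Leb^1 = X_i(t,\cdot)_\sharp(\rho_0\Leb^1)$; moreover $u_i(t,X_i(t,x))=u_0(x)$. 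Here one should be careful that, a priori, a generic $L^\infty$ distributional solution need not be transported by a flow; but the regularity $u_i \in W^{1\infty}$ in $x$ makes $V(\xi_i,u_i)$ Lipschitz in $x$, hence the renormalization/superposition theory (or simply the classical method of characteristics, as in Remark~\ref{r:dafermos} and the construction in \S\ref{sss:app}) applies and gives this Lagrangian representation.

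The core estimate is then on
\[
   Q(t) := \int_\R \rho_0(x)\,|X_1(t,x)-X_2(t,x)|\,dx,
\]
which vanishes at $t=0$. Differentiating and using the ODEs for $X_1,X_2$,
\[
   \frac{dQ}{dt} \leq \int_\R \rho_0(x)\,\big|V(\xi_1,u_1)(t,X_1) - V(\xi_2,u_2)(t,X_2)\big|\,dx,
\]
and I would split the integrand through the intermediate term $V(\xi_1,u_1)(t,X_2)$, exactly as in the proof of Lemma~\ref{l:quenne}: the ``flow'' part $|V(\xi_1,u_1)(t,X_1)-V(\xi_1,u_1)(t,X_2)|$ is bounded by $C(V)$ times $(\|\partial_x u_1\|_{L^\infty} + \mathrm{TotVar}\,\eta\,\|\rho_1\|_{L^\infty})$ times $|X_1-X_2|$, hence contributes $\leq C\,Q(t)$; the ``coefficient'' part $|V(\xi_1,u_1)(t,X_2)-V(\xi_2,u_2)(t,X_2)|$ is bounded by $C(V)(|\xi_1-\xi_2|(t,X_2) + |u_1-u_2|(t,X_2))$. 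For the $\xi$-difference I would use the push-forward identity $\rho_i(t,\cdot)\Leb^1 = X_i(t,\cdot)_\sharp(\rho_0\Leb^1)$ and the total-variation bound on $\eta$ to convert $\int \rho_0(x)|\xi_1-\xi_2|(t,X_2(t,x))\,dx$ into a quantity controlled by $\|\rho_0\|_{L^\infty}\,\mathrm{TotVar}\,\eta\cdot Q(t)$ (writing $\xi_1-\xi_2$ as an integral of $\eta$ against $\rho_1-\rho_2$ and pushing both densities back to $\rho_0$ along the respective flows, as in the estimate of $I_{21}$). For the $u$-difference, since $u_i(t,X_i(t,x))=u_0(x)$, one has $u_1(t,X_2) - u_2(t,X_2) = u_1(t,X_2)-u_1(t,X_1)$, which is again $\leq \|\partial_x u_1\|_{L^\infty}|X_1-X_2| = \|\rho_1 z_1\|_{L^\infty}|X_1-X_2|$, contributing $\leq C\,Q(t)$; here I use the structural fact $\partial_x u_i = \rho_i z_i$ with $\|z_i\|_{L^\infty}\le\|z_0\|_{L^\infty}$ from~\eqref{e:mpii}.

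Putting these together yields $\frac{dQ}{dt}\leq K\,Q(t)$ on $[0,\tau]$ for a constant $K$ depending only on $V$, $\|z_0\|_{L^\infty}$, $\mathrm{TotVar}\,\eta$ and the $L^\infty$ bounds on $\rho_1,\rho_2$; since $Q(0)=0$, Gr\"onwall gives $Q\equiv0$, so $X_1(t,\cdot)=X_2(t,\cdot)$ a.e.\ with respect to $\rho_0\Leb^1$. From this I recover $\rho_1=\rho_2$ via the push-forward identities (they push $\rho_0\Leb^1$ forward along the same map), hence also $\xi_1=\xi_2$, and then $u_1=u_2$ because both solve the same transport equation $\partial_t u + V(\xi,u)\partial_x u = 0$ with the same datum $u_0$ and the same (now Lipschitz) coefficient, e.g.\ again by the representation $u_i(t,X(t,x))=u_0(x)$ along the common flow. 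The main obstacle is the first step: justifying rigorously that a bare $L^\infty$ distributional solution $\rho_i$ is genuinely transported by the characteristic flow and that the endpoint identities $\rho_i(t,\cdot)\Leb^1=X_i(t,\cdot)_\sharp(\rho_0\Leb^1)$ and $u_i(t,X_i(t,x))=u_0(x)$ hold — this requires invoking the well-posedness/representation theory for continuity equations with $x$-Lipschitz velocity (as in~\cite{KeimerPflug}, or DiPerna--Lions--Ambrosio), which is exactly where the $W^{1\infty}$ regularity assumption on $u$ is used; once that is in hand, the Gr\"onwall computation is routine bookkeeping mirroring Lemma~\ref{l:quenne}.
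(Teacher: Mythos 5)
Your argument follows the paper's closely: the same Lagrangian functional $Q(t)=\int_{\R}\rho_0|X_1-X_2|\,dx$, the same Gr\"onwall estimate mirroring Lemma~\ref{l:quenne}, the same push-forward identity to recover $\rho_1=\rho_2$, and the same observation (correctly flagged as the nontrivial preliminary) that the $W^{1\infty}$ regularity of $u_i$ makes $V(\xi_i,u_i)$ Lipschitz in $x$ so that the Lagrangian representation is legitimate.

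There is, however, a genuine gap in the last step. You write that once $\rho_1=\rho_2$ (hence $\xi_1=\xi_2=:\xi$) is known, $u_1=u_2$ follows ``because both solve the same transport equation $\partial_t u + V(\xi,u)\partial_x u=0$ with the same datum and the same (now Lipschitz) coefficient, along the common flow.'' This is not right: the coefficient is $V(\xi,u_i)$, which still involves the unknown $u_i$, so the two equations are not literally the same and the flow is not literally common. The equation is quasilinear in $u$, and uniqueness of Lipschitz solutions is not an immediate consequence of the coefficient being Lipschitz in $x$. What $Q\equiv 0$ actually gives you is that $X_1(t,x)=X_2(t,x)$ only for $\rho_0\Leb^1$-a.e.\ $x$; on the complement of $\mathrm{supp}\,\rho_0$ the two flows may genuinely diverge. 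The paper closes this gap via the argument of \S\ref{sss:pconvu2}: writing $u_i(t,z)=u_0(Y_i(t,z))$, for points $z$ with $Y_1(t,z)\ne Y_2(t,z)$ one shows (by the monotonicity of the flows and $Q\equiv 0$) that $\rho_0\equiv 0$ a.e.\ on the interval $[Y_1(t,z),Y_2(t,z)]$, and then uses the structural hypothesis $u_0'=\rho_0 z_0$ from~\eqref{e:id2} to conclude that $u_0$ is constant there, hence $u_0(Y_1(t,z))=u_0(Y_2(t,z))$ and $u_1(t,z)=u_2(t,z)$. This is precisely where the assumption that $u_0$ is not merely Lipschitz but has derivative proportional to $\rho_0$ earns its keep, and your sketch omits it.
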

\begin{proof}
We basically follow the same argument as in~\cite{ColomboCrippaSpinolo2} and in the proof of Lemma~\ref{l:quenne}, so we only touch upon the main steps.  For $i=1, 2$, we consider $\xi_i$ given by the formula in~\eqref{e:nlGARZ} with $\rho_i$ in place of $\rho$, and the Cauchy problem
\be \label{e:char}
       d X_i/dt = V(\xi_i, u_i), \qquad  X_i(0, x) = x,
\eq
which satisfies the assumptions of the Lipschitz Cauchy Picard Lindel\"of Theorem and hence admits exactly one solution, the characteristic curve $X_i(\cdot, x)$. Next, we set 
\be \label{e:Q}
    Q(t) : = \int_{\R} |X_1 (t, x)- X_2(t, x)| \rho_0(x) dx, 
\eq 
and by repeating the same argument as in the proof of Lemma~\ref{l:quenne} we arrive at 
\begin{equation*}
\begin{split}
     \frac{d Q}{dt} & \leq C (\| \partial_x u_1 \|_{L^\infty},  \| \rho_1 \|_{L^\infty}, \| \rho_2 \|_{L^\infty}, \mathrm{TotVar} \, \eta ) Q(t),
\end{split}
\end{equation*}
which owing to the Gronwall Lemma and the equality $Q(0)$ yields $Q(t)=0$ for every $t \in ]0, \tau[$. 
By arguing as in the proof of Lemma~\ref{l:quenne} we then obtain the identity $u_1 = u_2$. 
To establish the identity $\rho_1 = \rho_2$ we point out that
for every $\varphi \in C^0_c(\R)$ we have
$$
   \int_{\R} \rho_1 (t, x) \varphi(x) dx =       \int_{\R} \rho_0 (y) \varphi(X_1(t, y)) dy \stackrel{Q(t) \equiv 0}{=}
    \int_{\R} \rho_0 (y) \varphi(X_2(t, y)) dy  =   \int_{\R} \rho_2 (t, x) \varphi(x) dx 
$$
and by the arbitrariness of $\varphi$ we conclude that $\rho_1= \rho_2$. 
\end{proof}
\subsection{Global-in-time existence}\label{ss:gte}
\begin{lemma} \label{l:gte}
Under the same assumptions as in Theorem~\ref{t:wpnl}, assume furthermore that  $\eta$ is Lipschitz continuous on $\R_-$ and that 
\be \label{e:negativo}
   \partial_1 V  \eta(0) + \| z_0 \|_{L^\infty}\partial_2 V  \leq 0.
\eq
Then the solution given in the statement of Theorem~\ref{t:wpnl} can be extended globally-in-time. \end{lemma}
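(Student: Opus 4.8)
The plan is to reduce the statement to an a priori bound on $\|\rho(t,\cdot)\|_{L^\infty}$ that stays finite for every $t\ge0$, and then conclude by continuation. The starting observation is that the \emph{only} obstruction to global existence in Theorem~\ref{t:wpnl} is a finite-time blow-up of $\|\rho(t,\cdot)\|_{L^\infty}$: the mass $\|\rho(t,\cdot)\|_{L^1}=\|\rho_0\|_{L^1}$ is conserved, the bounds $0\le\xi\le1$ and $\|z(t,\cdot)\|_{L^\infty}\le\|z_0\|_{L^\infty}$ together with the identity $\partial_x u=\rho z$ persist by~\eqref{e:mpxi} and~\eqref{e:mpii}, and the local existence time furnished by Theorem~\ref{t:wpnl} is bounded below in terms of $V$, $\|z_0\|_{L^\infty}$ and the current value of $\|\rho\|_{L^\infty}$ only. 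For the restart one uses the harmless variant of Theorem~\ref{t:wpnl} in which the hypothesis $\rho_0\le1$ is replaced by $\xi[\rho_0]=\rho_0\ast\eta\le1$: in the proof $\rho_0\le1$ is invoked only to ensure $\xi(0,\cdot)\le1$, and the mollified truncations $\rho_{0n}$ of $\rho_0$ of Step~1 in~\S\ref{sss:app} can be chosen so that $\rho_{0n}\ast\eta\le1$ as well, since neither truncation nor mollification by a probability kernel makes $\rho_0\ast\eta$ exceed $1$. Hence a bound on $\|\rho(t,\cdot)\|_{L^\infty}$ that is finite on all of $[0,+\infty)$ delivers a global-in-time solution.

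The core is the a priori estimate, which I would first establish assuming in addition $\rho_0\in W^{1\infty}(\R)$, so that by~\eqref{e:est_rho} and the estimates of Step~6 in~\S\ref{sss:app} (passed to the limit) the solution satisfies $\rho(t,\cdot)\in W^{1\infty}(\R)$ with norm locally bounded in $t$; being also in $L^1(\R)$, $\rho(t,\cdot)$ is then uniformly continuous and vanishes at $\pm\infty$, so $g(t):=\|\rho(t,\cdot)\|_{L^\infty}$ is attained and locally Lipschitz, and the elementary maximum-differentiation property recalled in Step~2 of~\S\ref{sss:app} applies. As in Step~3 of~\S\ref{sss:app}, along the characteristics one has
\[
\partial_t\rho+V(\xi,u)\,\partial_x\rho=-\rho\,\partial_1 V\,\partial_x\xi-\rho^2\,\partial_2 V\,z .
\]
Unlike in Step~3, I would \emph{not} discard the term $-\rho\,\partial_1 V\,\partial_x\xi$. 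At a point $x_\ast$ where $\rho(t_\ast,\cdot)$ attains its maximum $m$, the identity $\partial_x\xi(t_\ast,x_\ast)=-m\,\eta(0)+\int_{x_\ast}^{+\infty}\eta'(x_\ast-y)\,\rho(t_\ast,y)\,dy$ together with $0\le\int_{x_\ast}^{+\infty}\eta'(x_\ast-y)\,\rho(t_\ast,y)\,dy\le\|\eta'\|_{L^\infty(\R_-)}\|\rho_0\|_{L^1}$ (mass conservation) gives $\partial_x\xi(t_\ast,x_\ast)\le-m\,\eta(0)+\|\eta'\|_{L^\infty(\R_-)}\|\rho_0\|_{L^1}$; since $\partial_1 V\le0$ and $\rho\ge0$ this yields $-\rho\,\partial_1 V\,\partial_x\xi\,(t_\ast,x_\ast)\le m^2\,\eta(0)\,\partial_1 V+C(V)\,\|\eta'\|_{L^\infty(\R_-)}\|\rho_0\|_{L^1}\,m$, whereas $\partial_2 V\ge0$ and $z\ge-\|z_0\|_{L^\infty}$ give $-\rho^2\,\partial_2 V\,z\,(t_\ast,x_\ast)\le m^2\,\partial_2 V\,\|z_0\|_{L^\infty}$. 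Adding, the quadratic-in-$m$ contributions combine into $m^2\big(\eta(0)\,\partial_1 V+\|z_0\|_{L^\infty}\,\partial_2 V\big)\le0$ by~\eqref{e:negativo}, so that $g'(t_\ast)\le C(V)\,\|\eta'\|_{L^\infty(\R_-)}\|\rho_0\|_{L^1}\,g(t_\ast)$ for a.e.\ $t_\ast$, and Gronwall gives
\[
\|\rho(t,\cdot)\|_{L^\infty}\le\|\rho_0\|_{L^\infty}\exp\!\big(C(V)\,\|\eta'\|_{L^\infty(\R_-)}\,\|\rho_0\|_{L^1}\,t\big),
\]
which is finite for every $t\ge0$.

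To dispose of the auxiliary regularity, I would run the previous argument on the $W^{1\infty}$-approximations $\rho_{0n},u_{0n},z_{0n}$ built in Step~1 of~\S\ref{sss:app}: these preserve~\eqref{e:negativo} (which only involves $V$ and $\|z_0\|_{L^\infty}$) and satisfy $\|\rho_{0n}\|_{L^\infty}\le\|\rho_0\|_{L^\infty}$, $\|\rho_{0n}\|_{L^1}\le\|\rho_0\|_{L^1}$, so each corresponding solution obeys the displayed bound with $\|\rho_0\|_{L^\infty},\|\rho_0\|_{L^1}$; passing to the limit by the stability of Corollary~\ref{c:stability} and the weak-$\ast$ lower semicontinuity of the $L^\infty$ norm, the same bound holds for $\rho$. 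Feeding this globally finite bound into the continuation argument of the first paragraph yields the claim.

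I expect the main obstacle to be pinning down this estimate, and in particular realizing that the term $-\rho\,\partial_1 V\,\partial_x\xi$ dropped in Step~3 must be retained and that, through the representation of $\partial_x\xi$, it produces at the maximum of $\rho$ the stabilizing contribution $m^2\eta(0)\,\partial_1 V$, which under precisely assumption~\eqref{e:negativo} absorbs the destabilizing term $m^2\,\partial_2 V\,\|z_0\|_{L^\infty}$ coming from $\partial_x u=\rho z$ — leaving only a contribution linear in $m$ (a consequence of the Lipschitz regularity of $\eta$ and of the conserved mass of $\rho$), which is what rules out finite-time blow-up. The regularity needed to run the maximum-point argument, and the reformulation of Theorem~\ref{t:wpnl} with $\xi[\rho_0]\le1$ in place of $\rho_0\le1$, are routine.
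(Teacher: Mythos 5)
Your approach is essentially the same as the paper's: in both the key step is to retain the term $-\rho\,\partial_1 V\,\partial_x\xi$ discarded in Step~3 of~\S\ref{sss:app}, bound $\partial_x\xi$ at a maximum point of $\rho$ by $-m\,\eta(0)+\|\eta'\|_{L^\infty(\R_-)}\|\rho_0\|_{L^1}$, and invoke~\eqref{e:negativo} so that the two quadratic-in-$m$ contributions cancel, leaving a linear Gronwall bound. The only difference is organizational: the paper runs the estimate directly on the smooth approximants $\rho_{n+1}$ (with a small double-indexed induction in $n$, since $\partial_x u_n=\rho_n z_n$) and then passes to the limit, whereas you assume $\rho_0\in W^{1\infty}$, argue on the limit solution, and remove the extra regularity afterwards by Corollary~\ref{c:stability} — two interchangeable routine routes to the same estimate.
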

For other conditions yielding global in time existence, see also Remark~\ref{r:mp} at the end of \S\ref{ss:proofl21}. Note furthermore that, if we replace $\eta$ by $\eta_\ee : = \eta (\sfrac{\cdot}{\ee})/\ee$, Lemma~\ref{e:negativo} implies that, if $\eta \in W^{1 \infty} (\R_-)$ and $\partial_2 V$ is bounded away from $0$, there is $\ee_0$ such that for every $\ee \in ]0, \ee_0[$ condition~\eqref{e:negativo} is satisfied and hence the solution of~\eqref{e:nlGARZee} is defined globally in time. 
\begin{proof}[Proof of Lemma~\ref{l:gte}]
The analysis in \S\ref{sss:app} implies that to establish the proof of Lemma~\ref{l:gte} it suffices to replace~\eqref{e:bdenne} with a global-in-time bound. We argue as in Step 3 in \S\ref{sss:app} and conclude that the function $g_{n+1}$ defined by~\eqref{e:gienne} satisfies 
{\color{black}$$
    g'_{n+1}(t) \leq \partial_1 V \eta(0) g_{n+1}^2 (t) +  \underbrace{ \|\partial_1 V\|_{L^\infty(0,1)} \| \eta' \|_{L^\infty(\R_-)} \| \rho_0 \|_{L^1}}_{:=\hat c} g_{n+1}(t) + g_{n+1}(t) g_{n}(t) \| z_0 \|_{L^\infty}\partial_2 V.
$$}
We now use induction to show that, under~\eqref{e:negativo}, the above inequality implies $g_{n}(t) \leq \exp[ \hat c t ]$. Indeed, assume that $g_n$ satisfies the desired bound, then at any point where $g_{n+1}(t) = \exp[\hat ct]$ we have 
$$
     g'_{n+1}(t) \leq g_{n+1}^2 (t) \underbrace{[ \partial_1 V \eta(0)  +  \| z_0 \|_{L^\infty}\partial_2 V ]}_{\leq 0 \; \text{by~\eqref{e:negativo}}}
+ \hat c g_{n+1}(t) \leq \hat c g_{n+1}(t),
$$
and since $g_{n+1}(0) \leq 1$ this implies that $g_{n+1}$ cannot exceed $\exp[\hat ct]$. 
\end{proof}
\subsection{Stability}\label{ss:stab}
\begin{corol}\label{c:stability}
Assume $\{ \rho_{0k} \}_{k \in \mathbb N}$ and $\{ u_{0k} \}_{k \in \mathbb N}$ are sequences of initial data satisfying the same assumptions as in the statement of Theorem~\ref{t:wpnl}, and term $\{ (\rho_k, u_k) \}$ the corresponding sequence of solutions of the Cauchy problem obtained by coupling~\eqref{e:nlGARZ} with the initial datum given by $(\rho_{0k}, u_{0k})$ , and by $\{ \xi_k \}_{k \in \mathbb N}$ the sequence of convolution terms defined as in~\eqref{e:nlGARZ}. Assume 
$$
   \rho_{0k} \weaks \rho_0 \; \text{weakly$^\ast$ in $L^\infty (\R)$}, \quad u_{0k} \to u_0 \; \text{uniformly in $C^0(\R)$}, \quad \| z_{0k} \|_{L^\infty}, \| \rho_{0k} \|_{L^1} \leq M
$$
for some $M>0$. Then 
\be \label{e:stabilitynl}
        \rho_{k} \weaks \rho \; \text{weakly$^\ast$ in $L^\infty (]0, T[\times \R)$}, \quad u_{k} \to u \; \text{uniformly in $C^0 (\Omega)$, for every $\Omega \subseteq [0, T[ \times \R$ compact}
        \eq
 and $\xi_k (t, x) \to \xi (t, x)$ for every $(t, x) \in ]0, T[ \times \R$,
provided $(\rho, u)$ denotes the solution of the Cauchy problem~\eqref{e:nlGARZ},\eqref{e:id}, $\xi$ is as in~\eqref{e:nlGARZ}  and $T = C(V, M)$. 
\end{corol}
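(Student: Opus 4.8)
\emph{Proof proposal.} The plan is to rerun the compactness argument of \S\ref{sss:limite}, keeping track of the fact that all the a priori estimates of Theorem~\ref{t:wpnl} are uniform in $k$, since they depend only on $V$ and on the common bound $M$ on $\|z_{0k}\|_{L^\infty}$ and $\|\rho_{0k}\|_{L^1}$. In particular the existence time $T=C(V,M)$ can be chosen the same for every $k$, and on each slab $]0,\bar t[\times\R$ with $\bar t<T$ we have, uniformly in $k$: $0\le\rho_k\le(1-\bar t\,C(V)M)^{-1}$ by \eqref{e:localex}; $0\le\xi_k\le1$ by \eqref{e:mpxi}; $\inf u_{0k}\le u_k\le\sup u_{0k}$, together with $\partial_x u_k=\rho_k z_k$ and $\|z_k\|_{L^\infty}\le M$, by \eqref{e:mpii}; hence, using the first equation in \eqref{e:nlGARZ}, a uniform $L^\infty$ bound on $\partial_t u_k=-V(\xi_k,u_k)\partial_x u_k$. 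Since $u_{0k}\to u_0$ in $C^0$, the $u_k$ are also uniformly bounded.

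I would then extract a subsequence (not relabelled) along which $\rho_k\weaks\rho$ weakly$^\ast$ in $L^\infty(]0,\bar t[\times\R)$ and, by the Arzel\`a--Ascoli theorem applied to the equicontinuous, uniformly bounded family $\{u_k\}$, $u_k\to\bar u$ locally uniformly with $\partial_x u_k\weaks\partial_x\bar u$ and $\partial_t u_k\weaks\partial_t\bar u$ weakly$^\ast$; thus $\bar u\in L^\infty(]0,\bar t[;W^{1\infty}(\R))$ with the Lipschitz bound inherited from $u_k$. The crucial step is the compactness of the convolution terms: writing $\xi_k(t,x)=\int_{\R}\rho_k(t,x-s)\eta(s)\,ds$ and using that $\eta\in BV(\R)$ by \eqref{e:eta}, one obtains $|\partial_x\xi_k|\le\|\rho_k\|_{L^\infty}\,\mathrm{TotVar}\,\eta$ uniformly in $k$, and, after an integration by parts that moves $\partial_x$ onto $\eta$ in the identity $\partial_t\xi_k=-\int_{\R}\partial_x\big(V(\xi_k,u_k)\rho_k\big)(t,x-s)\eta(s)\,ds$, a uniform bound on $\partial_t\xi_k$ as well. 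Hence $\{\xi_k\}$ is bounded and equi-Lipschitz, so (up to a further subsequence) $\xi_k\to\bar\xi$ locally uniformly; testing the convolution formula against space--time test functions and using $\rho_k\weaks\rho$ identifies $\bar\xi$ with the $\xi$ associated to $\rho$ through the formula in \eqref{e:nlGARZ}.

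With these convergences at hand I would pass to the limit. Since $V\in C^2$ and both arguments converge locally uniformly, $V(\xi_k,u_k)\to V(\xi,\bar u)$ locally uniformly, so that $V(\xi_k,u_k)\rho_k\weaks V(\xi,\bar u)\rho$ and $V(\xi_k,u_k)\partial_x u_k\weaks V(\xi,\bar u)\partial_x\bar u$ (strong $\times$ weak$^\ast$). Passing to the limit in the distributional formulation of the first equation in \eqref{e:nlGARZ} --- retaining test functions that do not vanish at $t=0$ to recover $\rho(0,\cdot)=\rho_0$ in the sense of Remark~\ref{r:dafermos} --- and in the pointwise formulation of the transport equation, together with $u_k(0,\cdot)\to u_0$, shows that $(\rho,\bar u)$ solves \eqref{e:nlGARZ},\eqref{e:idpose} in the class $L^\infty\times L^\infty(W^{1\infty})$; note that, unlike in \S\ref{sss:limite}, there is no mismatch between the iterates entering $V$ and the equations, so no analogue of \S\ref{sss:pconvu2} is required here. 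By Lemma~\ref{l:uni} this solution coincides with the unique solution $(\rho,u)$ of the Cauchy problem with datum $(\rho_0,u_0)$, so $\bar u=u$ and the extracted $\rho$ is the density of that solution. Since the limit does not depend on the subsequence, the whole sequences converge as claimed on $]0,\bar t[\times\R$, with $\xi_k\to\xi$ locally uniformly, hence pointwise at every $(t,x)$; as $\bar t<T$ is arbitrary, the conclusions hold on $]0,T[\times\R$.

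I expect the only genuinely delicate point to be the compactness of $\{\xi_k\}$ and the identification of its limit: this is what promotes the merely weak$^\ast$ convergence of $\rho_k$ to the strong (locally uniform) convergence of $\xi_k$ needed both to pass to the limit in the nonlinear terms $V(\xi_k,u_k)\rho_k$ and $V(\xi_k,u_k)\partial_x u_k$ and to obtain the pointwise-in-$(t,x)$ statement on $\xi_k$. Everything else is a routine repetition of \S\ref{sss:limite} with uniform-in-$k$ constants and a direct, iteration-free limit passage.
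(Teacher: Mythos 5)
Your proposal is correct, and while it retraces the compactness machinery of \S\ref{sss:limite} as the paper does, it deviates in the one genuinely nontrivial step: the pointwise convergence of $\xi_k$. The paper obtains this by invoking Remark~\ref{r:dafermos} to give a pointwise-in-$t$ meaning to $\rho_k(t,\cdot)$ and then citing Step~2 of the proof of Theorem~1.1 in~\cite{ColomboCrippaSpinolo2} to upgrade the weak$^\ast$ convergence of $\rho_k$ in $L^\infty(]0,T[\times\R)$ to $\rho_k(t,\cdot)\weaks\rho(t,\cdot)$ for \emph{every} $t$, after which pointwise convergence of $\xi_k(t,x)$ is immediate from the fixed convolution kernel $\eta\in L^1$. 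You instead establish that $\{\xi_k\}$ is equi-Lipschitz in both $x$ (from $\partial_x\xi_k=\rho_k\ast\check\eta'$ with $\eta\in BV$) and $t$ (moving $\partial_x$ onto $\eta$ in $\partial_t\xi_k=-\eta\ast\partial_x(V(\xi_k,u_k)\rho_k)$), then apply Arzel\`a--Ascoli and identify the limit via the weak$^\ast$ convergence of $\rho_k$ tested against $\eta(x-\cdot)$. This is a legitimate alternative: it is more self-contained (no appeal to the external weak$^\ast$-time-continuity argument), and it actually yields the stronger conclusion of locally uniform convergence of $\xi_k$, whereas the paper's route relies on the already-established machinery of~\cite{ColomboCrippaSpinolo2} and Remark~\ref{r:dafermos} and is essentially a one-line proof. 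You also correctly observe that no analogue of \S\ref{sss:pconvu2} is needed here since each $(\rho_k,u_k)$ is a genuine solution with no index mismatch in $V(\xi_k,u_k)$, and that uniqueness (Lemma~\ref{l:uni}) identifies the accumulation point and promotes subsequential to full convergence.
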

\begin{proof}
We can argue as in \S\ref{sss:limite}, the only new point is establishing the \emph{pointwise} convergence $\xi_k (t, x) \to \xi (t, x)$. Towards this end, it suffices to recall Remark~\eqref{r:dafermos}, which gives a meaning to $\rho_k(t, \cdot)$ for \emph{every} $t$, and then apply the same argument as in {Step 2} of the proof of Theorem 1.1, item (i) in~\cite{ColomboCrippaSpinolo2}, which yields $\rho_k (t, \cdot) \weaks \rho(t, \cdot)$ for \emph{every} $t$.  
\end{proof}

\section{Oleinik estimate for a general function $V$}\label{s:olenik}
This section is devoted to the proof of Theorem \ref{t:oleinik}. We first prove the following one-sided Lipschitz estimate, uniform in the parameter $\ee$ as $\ee\to 0^+$. 
\begin{proposition}\label{p:oleinik}
Under the same assumptions as in the statement of Theorem~\ref{t:oleinik} there are positive constants $\ee_0, a, c>0$ only depending on $V,\|z_0\|_{L^\infty},\|\psi_0\|_{L^\infty}$ such that 
\begin{equation}\label{e:onesidexi}
\xi_\ee(t,y)-\xi_\ee(t,x)\ge -\left(a + \frac1{ct}\right)(y-x) \quad \text{for every $t>0$, $\ee \in ]0, \ee_0[$ and $x<y$}.
\end{equation}
\end{proposition}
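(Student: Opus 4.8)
The plan is to exploit the special structure of the exponential kernel, which gives the algebraic identity $\rho_\ee = \xi_\ee - \ee \partial_x \xi_\ee$, to derive a closed (scalar) PDE inequality for $h_\ee := \partial_x \xi_\ee$ along characteristics, and then obtain a Riccati-type differential inequality whose solutions obey the desired Oleinik bound. First I would compute the evolution equation for $\xi_\ee$: differentiating the convolution and using $\eta(x) = e^x \mathbbm{1}_{\R_-}$ (so $\eta' = \eta$ on $\R_-$ plus a Dirac contribution at $0$) yields, as in~\eqref{e:xit}, a transport equation of the form $\partial_t \xi_\ee + V(\xi_\ee, u_\ee) \partial_x \xi_\ee = R_\ee$, where $R_\ee$ is a convolution-type remainder. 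Using $\rho_\ee = \xi_\ee - \ee h_\ee$ one rewrites this so that the right-hand side becomes genuinely local up to a single convolution term; differentiating once more in $x$ produces an equation for $h_\ee$. The key point is that, after using the monotonicity and convexity assumptions \eqref{e:V}, \eqref{e:V1}, \eqref{e:V2} on $V$, the equation for $h_\ee$ along the characteristic flow $X_\ee$ takes the form
\[
\frac{d}{dt}\big(h_\ee(t, X_\ee(t,x))\big) \le A\, h_\ee^2 + B\, h_\ee + C,
\]
with $A < 0$ (thanks to $\partial_1 V \le -\alpha_V$ producing the dominant negative quadratic term, modulo an error controlled by $\beta_V$, which is where \eqref{e:V3} enters) and $B, C$ bounded in terms of $V$, $\|z_0\|_{L^\infty}$, $\|\psi_0\|_{L^\infty}$; here I would need the propagation-of-regularity bounds from Theorem~\ref{t:wpnl}, namely \eqref{e:mpii} and \eqref{e:est_h}, to control $\partial_x u_\ee = \rho_\ee z_\ee$ and $\partial_x z_\ee = \rho_\ee \psi_\ee$, together with \eqref{e:boundrho} to keep $\rho_\ee$ uniformly bounded (which in turn requires checking, via Lemma~\ref{l:gte} or Remark~\ref{r:mp}, that the solution is global and that $\ee_0$ can be chosen uniformly).

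Granting the Riccati inequality $y' \le A y^2 + B y + C$ with $A<0$, a standard comparison argument with the ODE gives a universal upper bound of the form $y(t) \le a + \tfrac{1}{ct}$ for $t>0$, \emph{independent of the initial value} $y(0)$ — this is the mechanism that produces the instantaneous regularizing $1/t$ term. Translating back: for $x < y$, writing $\xi_\ee(t,y) - \xi_\ee(t,x) = \int_x^y h_\ee(t,s)\,ds$ and using the pointwise upper bound on $h_\ee$ one would like to conclude. However, the bound on $h_\ee$ is an \emph{upper} bound, whereas \eqref{e:onesidexi} is a \emph{lower} bound on the increment; so the correct route is to run the characteristic comparison for $-h_\ee$ against the same Riccati flow read in the appropriate direction, or more precisely to observe that an upper bound $h_\ee(t,\cdot) \le a + 1/(ct)$ is exactly what is needed: $\xi_\ee(t,y)-\xi_\ee(t,x) = \int_x^y h_\ee \le (a+1/(ct))(y-x)$ would give the wrong sign, so instead one shows a \emph{lower} bound $h_\ee(t,\cdot) \ge -(a+1/(ct))$. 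Re-examining the signs: with $\partial_1 V \le -\alpha_V < 0$, the dominant term in the $h_\ee$ equation along characteristics is $-\partial_1 V \cdot h_\ee^2$-type with a \emph{favorable} sign pushing $h_\ee$ \emph{down} when it is large positive and \emph{up} when it is large negative — so it is the lower bound that is dynamically stable, and one gets $h_\ee \ge -(a + 1/(ct))$ from below by the same Riccati comparison applied to $-h_\ee$ from above. Integrating this lower bound over $[x,y]$ yields precisely \eqref{e:onesidexi}.

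The main obstacle I anticipate is twofold. First, deriving the equation for $h_\ee$ cleanly: the remainder $R_\ee$ in the $\xi_\ee$-equation is a nonlocal convolution term, and after differentiating in $x$ one must show that the nonlocal pieces either have a sign or are lower-order (absorbable into $B h_\ee + C$) — this is where the identity $\rho_\ee = \xi_\ee - \ee h_\ee$ is essential, since it converts $\ee$-weighted derivatives of $\xi_\ee$ back into the bounded quantity $\rho_\ee$, and this is presumably also where the restriction to exponential kernels is unavoidable. Second, verifying that the quadratic coefficient is strictly negative: the naive dominant term is $\partial_1 V\, h_\ee^2$, but expanding $\partial_x[\partial_1 V \,\partial_x \xi_\ee + \partial_2 V\, \partial_x u_\ee]$ produces also a $\partial_{11} V\, (\partial_x \xi_\ee)^2 = \partial_{11}V\, h_\ee^2$ contribution of the wrong sign (bounded by $\beta_V h_\ee^2$) and cross terms $\partial_{12} V\, h_\ee\, \partial_x u_\ee$; the condition $\alpha_V^2 > 54 \beta_V^2$ in \eqref{e:V3} is exactly the algebraic inequality ensuring that, after completing the square and absorbing the cross terms, the net coefficient of $h_\ee^2$ is still bounded above by a strictly negative constant. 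Pinning down that constant — and tracking how $\ee_0$ must be chosen so that the $O(\ee)$ corrections coming from $\rho_\ee = \xi_\ee - \ee h_\ee$ do not destroy the negativity — is the technical heart of the argument; everything else is comparison-principle bookkeeping.
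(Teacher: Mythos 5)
Your proposal takes essentially the same route as the paper: exploit the identity $\rho_\ee = \xi_\ee - \ee \partial_x \xi_\ee$ for the exponential kernel, derive a Riccati-type differential inequality for $h_\ee := \partial_x \xi_\ee$, and close it via an ODE comparison producing the $1/(ct)$ term, with $\alpha_V^2 > 54\beta_V^2$ entering exactly as you guess, to keep the net quadratic coefficient favorable once the $\partial_{11}V$ contributions and the $O(\ee)$ corrections are absorbed. The one refinement worth noting is that the paper tracks $m(t) := \inf_y h_\ee(t,y)$ and evaluates the equation for $h_\ee$ at a minimizer (where $\partial_x h_\ee = 0$ and $h_\ee(t,\cdot) \ge m(t)$ can be exploited pointwise under the convolution integrals), rather than along a fixed characteristic: this is precisely the mechanism by which the nonlocal terms are absorbed into the Riccati polynomial $c_2 m^2 + c_1 m - c_0$, which is the step your sketch correctly flags as the main obstacle but does not carry out.
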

\begin{proof}
To simplify the notation, in the proof of Proposition~\ref{p:oleinik} we write $\xi$, $\rho$, $u$ rather than $\xi_\ee$, $\rho_\ee$, $u_\ee$. We also set $h: = \partial_x \xi_\ee$. 
Owing to the stability result provided by Corollary~\ref{c:stability} we can assume with no loss of generality that the initial data $(\rho_0, u_0)$ are smooth and that $\rho_0$ is compactly supported, which implies that $\rho(t,\cdot)$ is compactly supported for every $t>0$ and hence that $\lim_{|x|\to \infty}h(t,x)=0$ for every $t\ge 0$. Owing to the propagation of regularity property~\eqref{e:est_rho}, $h$ is Lipschitz continuous. We set 
\be \label{e:emme0}
    m(t) : = \inf_{y \in \R}  h(t, y).  
\eq 
Since $h(t, \cdot)$ vanish at $\pm \infty$, for every $t\ge 0$ either  $h(t, \cdot) \ge 0$ and $m(t)=0$ or $m(t) <0$ and there is $x \in \R$ such that 
\be \label{e:emme}
    m(t) = h(t,x)= \min_{y \in \R}  h(t, y).  
\eq
Note furthermore that $m$ is a Lipschitz continuous function being the infimum of Lipschitz continuous functions, and as such differentiable at a.e. $t \in \R_+$. 
Assume that we have proved that for some suitable constants $c_0, c_1, c_2>0$ we have the inequality 
\be \label{e:goalolenik}
m'(t) \ge c_2 m^2(t) + c_1 m(t) - c_0 \quad \text{for a.e. $t \ge 0$}.
\eq
We claim that this implies 
\be \label{e:goalolenik2}
m(t) \ge - a - \frac{1}{ct}
\eq
for suitable constants $a>0$ and $c>0$. Note that~\eqref{e:goalolenik2} immediately yields~\eqref{e:onesidexi}. To see that~\eqref{e:goalolenik} implies~\eqref{e:goalolenik2} we first of all point out that the 
polynomial $c_2 x^2 + c_1x - c_0$ has two real and distinct roots, $x=m_1>0$ and $x=-m_2<0$.  By setting $a: = 2 m_2$, we can find $c \in ]0, c_2[$ in such a way that 
$$
    c_2 x^2 + c_1x - c_0 \ge c \, x^2 \quad \text{if $x \in ]-\infty, - a[$}. 
$$ 
This implies that, if $m(t) \leq -a$, then $m'(t) \ge c m^2 (t)$, which by the Comparison Theorem for ODEs yields $m(t) \ge - \sfrac{1}{ct}$. This in turn shows that~\eqref{e:goalolenik} implies~\eqref{e:goalolenik2}. 

Wrapping up, to establish~\eqref{e:onesidexi} we are left to prove~\eqref{e:goalolenik}. Note furthermore that, since $m(t) \leq 0$ for every $t\ge 0$, at any point $t$ of differentiability for $m$ where $m(t)=0$ we have $m'(t)=0$, which obviously implies~\eqref{e:goalolenik}. We are therefore left to verify~\eqref{e:goalolenik} at a.e. time $t$ where~\eqref{e:emme} holds. Towards this end, it suffices to show that 
\be \label{e:oleinik3}
      \frac{\partial h}{\partial t} (t, x)  \ge c_2 m^2(t) + c_1 m(t) - c_0 
\eq
provided $x$ is the same as in~\eqref{e:emme}.  Before entering the details, we make two last preliminary comments: first, 
in the following, we use the algebraic identity
\be
\label{e:magic}
      \rho= \xi - \ee h,
\eq
which follows from the fact that the kernel is the exponential function. Second,
in the rest of the proof we always assume that~\eqref{e:boundrho},
which can be justified by relying on~\eqref{e:localex} and a continuous induction argument. Indeed,~\eqref{e:localex} implies that $\| \rho(t, \cdot) \|_{L^\infty} \leq \sfrac{3}{2}$ for every $t \in [0, \tau]$, for a sufficiently small $\tau$ not depending on $\ee$. In the following, we will show that~\eqref{e:boundrho} implies~\eqref{e:onesidexi}, which in turn owing to~\eqref{e:magic} implies that on $[\tau, + \infty[$
$$
   \| \rho (t, \cdot) \|_{L^\infty} \leq 1 + \ee \left(a + \frac1{ct}\right) \leq 1 + \ee \left(a + \frac1{c \tau}\right)  \leq \frac{3}{2}
$$
provided $\ee$ is small enough, which obviously implies~\eqref{e:boundrho}. The rest of the proof is organized in the following steps. \\
{\bf Step 1} (equation for $h$). 
By convolving \eqref{e:nlGARZ} with $\eta_\ee$, we have 
$$
    \partial_t \xi + V (\xi, u) \partial_x \xi = - \frac{1}{\ee} \int_x^{+ \infty} 
    \exp \left( \frac{x-y}{\ee}\right) \partial_y V (\xi(y), u(y)) \xi(y) dy, 
$$
and by $x$-differentiating we get 
\begin{equation*}
\begin{split}
     \partial_t h + V(\xi, u) \partial_x h &  =- \partial_x [V (\xi, u)] h + \frac{1}{\ee}
     \left[ 
\partial_x [V (\xi, u)]
    \xi (x) 
 - \frac{1}{\ee} \int_x^{+ \infty} 
    \exp \left( \frac{x-y}{\ee}\right) \partial_y[ V (\xi(y), u(y))] \xi(y) dy
    \right]
\end{split}
\end{equation*}
We now evaluate the previous expression at a minimum point (where $\partial_x h$ vanishes) and by recalling that $\partial_x u= \rho  z $
we get 
$$
    \partial_x [V (\xi, u)] = \partial_1 V(\xi, u)  h + \partial_2 V (\xi, u) \rho  z.
$$
All in all, we arrive at 
\begin{equation} \label{e:deracca}
\begin{split}
     \partial_t h &   =  - \partial_1   V(t, x) h^2 (t, x) -  \partial_2 V  (t, x) \rho(t, x) h(t, x) z(t, x) 
    \\ &  + \underbrace{
    \frac{1}{\ee}
     \left[ 
  \partial_1 V (t, x) h (t, x)
    \xi (t, x) 
 - \frac{1}{\ee} \int_x^{+ \infty} 
    \exp \left( \frac{x-y}{\ee}\right)  \partial_1 V (t, y) h (t, y)
    \xi(t, y)  dy
    \right]}_{: = T_A} \\ & 
   + \underbrace{\frac{1}{\ee}
     \left[ 
    \partial_2 V (t, x)  \rho (t, x) z(t, x)
    \xi (t, x) 
 - \frac{1}{\ee} \int_x^{+ \infty} 
    \exp \left( \frac{x-y}{\ee}\right) \partial_2 V (t, y)  \rho (t, y) z(t, y) \xi(t, y) dy
    \right].}_{: = T_B}
\end{split}
\end{equation}
In the previous expression and in the following sometimes to simplify he notation we sometimes write $V (t, x)$ rather than $V(\xi(t, x), u(t, x))$.\\
{\bf Step 2} (estimate of $T_A$). We have
\begin{equation}\label{e:uno}
\begin{split}
    T_A  & =  
     \frac{\partial_1 V (t, x) h (t, x)
    \xi (t, x)  }{\ee} -   
  \frac{ \partial_1 V (t, x) 
    \xi (t, x)}{\ee^2} \int_x^{+ \infty} 
    \exp \left( \frac{x-y}{\ee}\right) h (t, y) dy
     \\ & \quad  +    
   \underbrace{ \frac{1}{\ee^2} \int_x^{+ \infty} 
    \exp \left( \frac{x-y}{\ee}\right) \Big[  \xi(t, x) \partial_1 V (\xi(t, x), u(t, x)) 
    -  \xi(t, y) \partial_1 V (\xi(t, y), u(t, x)) 
      \Big] h(t, y) dy}_{: = T_{A1}} \\ &     
\quad + \underbrace{ \frac{1}{\ee^2} \int_x^{+ \infty}
    \exp \left( \frac{x-y}{\ee}\right) \Big[\partial_1 V (\xi(t, y), u(t, x))   - \partial_1 V (\xi(t, y), u(t, y))  \Big] \xi(t, y) h(t, y)dy }_{: = T_{A2}}  \\
\end{split}
\end{equation}
and 
\begin{equation} \label{e:TA1}
\begin{split}
    T_{A1}    & =     
    - \frac{\partial_1 V (\xi(t, x), u(t, x))}{\ee^2} \int_x^{+ \infty} 
    \exp \left( \frac{x-y}{\ee}\right) [  \xi(t, y)  
    -  \xi(t, x) 
      ] h(t, y) dy \\ &     
     \qquad  + \underbrace{ \frac{1}{\ee^2} \int_x^{+ \infty}
       \exp \left( \frac{x-y}{\ee}\right) \Big[\partial_1 V (\xi(t, x), u(t, x))   - \partial_1 V (\xi(t, y), u(t, x))  \Big] \xi(t, y) h(t, y)dy}_{: = T_{A12}}
\end{split}
\end{equation}
We now argue as in~\cite{8autori}, use~\eqref{e:magic} and arrive at 
\begin{equation} \label{e:xixxiy}
\begin{split}
    \frac{1}{\ee^2} \int_x^{+ \infty} &
    \exp \left( \frac{x-y}{\ee}\right) [\xi(t, y) - \xi(t, x) ]h(t, y)dy =
    \frac{1}{\ee^2} \int_x^{+ \infty}
    \exp \left( \frac{x-y}{\ee}\right) \rho(t, y) h(t, y)dy \\& \quad + \frac{1}{\ee} \int_x^{+ \infty} 
    \exp \left( \frac{x-y}{\ee}\right) h^2(t, y)dy - \xi(t, x)  \frac{1}{\ee^2} \int_x^{+ \infty}
    \exp \left( \frac{x-y}{\ee}\right) h(t, y)dy
\end{split}
\end{equation}
Owing to~\eqref{e:emme} we have
\be \label{e:accax}
    \frac{1}{\ee^2} \int_x^{+ \infty}
    \exp \left( \frac{x-y}{\ee}\right) \rho(t, y) h(t, y)dy \ge h(t, x) \frac{1}{\ee^2} \int_x^{+ \infty}
    \exp \left( \frac{x-y}{\ee}\right) \rho(t, y) dy = \frac{h(t, x) \xi(t, x)}{\ee}
\eq
and by combining the above estimates we conclude that
\be \label{e:uno2}
\begin{split}
    T_{A} \ge   & 
     \frac{\partial_1 V (t, x) h (t, x)
    \xi (t, x)  }{\ee} -   
  \frac{ \partial_1 V (t, x) 
    \xi (t, x)}{\ee^2} \int_x^{+ \infty} 
    \exp \left( \frac{x-y}{\ee}\right) h (t, y) dy
      +   T_{A1} + T_{A2} \\
&  \stackrel{\eqref{e:TA1},\eqref{e:xixxiy},\eqref{e:accax}}{\ge} 
- \frac{\partial_1 V (\xi(t, x), u(t, x))}{\ee} \int_x^{+ \infty} 
    \exp \left( \frac{x-y}{\ee}\right) h^2(t, y)dy + T_{A12} + T_{A2} 
\end{split}
\eq  
Next, we point out that 
\begin{equation}
\label{e:t4} \begin{split}
  |T_{A2}| & \stackrel{\eqref{e:civ}}{\leq}
  \frac{ C(V)}{\ee^2}
  \int_x^{+ \infty}
    \exp \left( \frac{x-y}{\ee}\right) | u(t, x) - u(t, y)  | \xi(t, y) | h(t, y)| dy \\ &
  \stackrel{\eqref{e:boundrho},\eqref{e:mpxi},\eqref{e:mpii}}{\leq}
     \frac{C(V) \| z_0 \|_{L^\infty}}{\ee^2}
    \int_x^{+ \infty} \exp \left( \frac{x-y}{\ee}\right) [y-x] | h(t, y)| dy.
\end{split}
\end{equation} 
We now control $T_{A12}$. First, we decompose it as 
\be \label{e:TA12} \begin{split}
    T_{A12} & =
    \underbrace{\frac{1}{\ee^2} \int_x^{+ \infty}
       \exp \left( \frac{x-y}{\ee}\right) \Big[\partial_1 V (\xi(t, x), u(t, x))   - \partial_1 V (\xi(t, y), u(t, x))  \Big] \xi(t, y) h(t, y) \mathbbm{1}_{h(t, y) \ge 0}dy}_{: = T_{A121}} \\
     & + \underbrace{\frac{1}{\ee^2} \int_x^{+ \infty}
       \exp \left( \frac{x-y}{\ee}\right) \Big[\partial_1 V (\xi(t, x), u(t, x))   - \partial_1 V (\xi(t, y), u(t, x))  \Big] \xi(t, y) h(t, y) \mathbbm{1}_{h(t, y) \leq 0}dy}_{:= T_{A122}}
\end{split}
\eq
To control $T_{A121}$, we use the Fundamental Theorem of Calculus and write 
\be \label{e:tfc} \begin{split}
    \partial_1 V (\xi(t, x), u(t, x))   & - \partial_1 V (\xi(t, y), u(t, x))  = \int_{x}^y [-\partial_{11} V (\xi(t, u), u(t, x))]
    h(t, u) du  \\
   & \stackrel{\eqref{e:V2},\eqref{e:emme}}{\ge} \beta_V m(t) [y-x].
\end{split}\eq
Since we are handling the case $m(t)<0$, this yields 
\be \label{e:TA121} \begin{split}
    T_{A121} & \ge 
    \frac{\beta_V m(t)}{\ee} \int_x^{+ \infty}
       \exp \left( \frac{x-y}{\ee}\right)  \left[ \frac{y-x}{\ee}\right] \xi(t, y) h(t, y) \mathbbm{1}_{h(t, y) \ge 0}dy\\
& 
    \stackrel{m<0, 0 \leq \xi \leq 1}{\ge}  \frac{\beta_V m(t)}{\ee} \int_x^{+ \infty}
       \exp \left( \frac{x-y}{\ee}\right)  \left[ \frac{y-x}{\ee}\right] | h(t, y)| dy.
\end{split}\eq
To control $T_{A122}$ we use again~\eqref{e:tfc} and introduce the decomposition 
\[ \begin{split}
     \int_{x}^y [-\partial_{11} V (\xi(t, w), u(t, x))]
    h(t, w) dw & =  \int_{x}^y [-\partial_{11} V (\xi(t, w), u(t, x))]
    h(t, w) \mathbbm{1}_{h(t, w) \ge 0}dw \\
    & \quad +   \int_{x}^y [-\partial_{11} V (\xi(t, w), u(t, x))]
    h(t, w) \mathbbm{1}_{h(t, w) \leq 0}dw,
\end{split} \] 
\be \label{e:TA122}\begin{split}
  T_{A122} & =
  \frac{1}{\ee^2} \int_x^{+ \infty}
       \exp \left( \frac{x-y}{\ee}\right) \xi(t, y) h(t, y) \mathbbm{1}_{h(t, y) \leq 0} \int_{x}^y [-\partial_{11} V (\xi(t, w), u(t, x))]
    h(t, w) \mathbbm{1}_{h(t, w) \ge 0} dw dy \\ & +
    \frac{1}{\ee^2} \int_x^{+ \infty}
       \exp \left( \frac{x-y}{\ee}\right) \xi(t, y) h(t, y) \mathbbm{1}_{h(t, y) \leq 0} \int_{x}^y \underbrace{[-\partial_{11} V (\xi(t, w), u(t, x))]}_{\ge 0}
    h(t, w) \mathbbm{1}_{h(t, w) \leq  0} dw dy \\
   &  \stackrel{ \xi \ge 0}{\ge}
    \frac{1}{\ee^2} \int_x^{+ \infty}
       \exp \left( \frac{x-y}{\ee}\right) \xi(t, y) h(t, y) \mathbbm{1}_{h(t, y) \leq 0} \int_{x}^y [-\partial_{11} V (\xi(t, w), u(t, x))]
    h(t, w) \mathbbm{1}_{h(t, w) \ge 0} dw dy \\
   & \stackrel{\eqref{e:emme}, m <0}{\ge}
     \frac{m(t)}{\ee^2} \int_x^{+ \infty}
       \exp \left( \frac{x-y}{\ee}\right) \xi(t, y)  \int_{x}^y [-\partial_{11} V (\xi(t, w), w(t, x))]
    h(t, w) \mathbbm{1}_{h(t, w) \ge 0} dw dy \\ & =
      \frac{m(t)}{\ee^2} \int_x^{+ \infty} 
       [-\partial_{11} V (\xi(t, w), u(t, x))]
    h(t, w) \mathbbm{1}_{\{h(t, w) \ge 0\}}  \int_{u}^{+\infty}  \exp \left( \frac{x-y}{\ee}\right) \xi(t, y)dy dw 
   \\ &  \stackrel{0 \leq \xi \leq 1, m <0}{\ge}
     \frac{m(t)}{\ee} \int_x^{+ \infty} 
       \exp \left( \frac{x-w}{\ee}\right)  [-\partial_{11} V (\xi(t, w), u(t, x))]
    h(t, w) \mathbbm{1}_{\{h(t, w) \ge 0\}}  dy dw \\
    & \stackrel{\eqref{e:V2}}{\ge}
    \frac{\beta_V m(t)}{\ee} \int_x^{+ \infty} 
       \exp \left( \frac{x-w}{\ee}\right)  
   | h(t, w)|    dw
\end{split}
\eq
By plugging~\eqref{e:t4},\eqref{e:TA12},\eqref{e:TA121} and~\eqref{e:TA122} into~\eqref{e:uno2} and recalling~\eqref{e:V1} we get
\begin{equation}\label{e:estTAfirst}
\begin{split}
T_A & 
\ge  \frac{1}{\ee} \int_x^{+ \infty} 
    \exp \left( \frac{x-y}{\ee} \right) \left[ \frac{\alpha_V}{6} h^2(t, y) - C(V) \|z\|_{L^\infty}  |h(t, y) | \frac{y-x}{\ee} \right] dy   
    \\ & +  
    \frac{1}{\ee} \int_x^{+ \infty} 
    \exp \left( \frac{x-y}{\ee} \right) \left[ \frac{\alpha_V}{6} h^2(t, y) + 2 \beta_V m(t) |h(t, y)| \frac{y-x}{\ee}   \right] dy \\
     & + 
     \frac{1}{\ee} \int_x^{+ \infty} 
    \exp \left( \frac{x-y}{\ee} \right) \left[ \frac{\alpha_V}{6} h^2(t, y) + 2 \beta_V m(t) |h(t, y)| \right] dy \\
    & + 
    \frac{1}{\ee} \int_x^{+ \infty}  \exp \left( \frac{x-y}{\ee} \right) \frac{\alpha_V}{2} h^2(t, y) dy.
\end{split}
\end{equation}
We now use the elementary inequality
\be \label{e:binomio}
    h^2 - c |h| \ge - \frac{c^2}{4}, \quad \text{for every $h\in \R$, $c>0$},
\eq 
 recall the with identities
\be \label{e:integrbyparts}
    \frac{1}{\ee} \int_x^{+ \infty} 
    \exp \left( \frac{x-y}{\ee} \right) \frac{[y-x]}{\ee} dy=1, \qquad 
     \frac{1}{\ee} \int_x^{+ \infty} 
    \exp \left( \frac{x-y}{\ee} \right) \frac{[y-x]^2}{\ee^2} dy=2
\eq  and control the first line in~\eqref{e:estTAfirst}:
\begin{equation*}
\begin{split}
     \frac{1}{\ee} & \frac{\alpha_V}{6}  \int_x^{+ \infty} 
    \exp \left( \frac{x-y}{\ee} \right) \left[ h^2(t, y) - \frac{C(V)}{\alpha_V} \|z_0\|_{L^\infty}  |h(t, y) | \frac{y-x}{\ee} \right] dy  \\
   & =
    \frac{1}{\ee} \frac{\alpha_V}{6}  \int_x^{ x+ \ee} 
    \exp \left( \frac{x-y}{\ee} \right) 
   \left[ h^2(t, y) - \frac{C(V)}{\alpha_V} \|z_0\|_{L^\infty} |h(t, y) | \frac{y-x}{\ee} \right] dy
 \\ & \quad + 
 \frac{1}{\ee} \frac{\alpha_V}{6}  \int_{ x+ \ee}^{+ \infty}
    \exp \left( \frac{x-y}{\ee} \right) 
   \left[ h^2(t, y) - \frac{C(V)}{\alpha_V} \|z_0\|_{L^\infty}  |h(t, y) | \frac{y-x}{\ee} \right] dy
   \\ & \ge 
       \frac{1}{\ee} \frac{\alpha_V}{6}  \int_x^{ x+ \ee} 
    \exp \left( \frac{x-y}{\ee} \right) \frac{y-x}{\ee}
   \left[ h^2(t, y) - \frac{C(V)}{\alpha_V} \|z_0\|_{L^\infty}|h(t, y) |  \right] dy
   \\ & \quad + \frac{1}{\ee} \frac{\alpha_V}{6}  \int_{ x+ \ee}^{+ \infty}
    \exp \left( \frac{x-y}{\ee} \right) 
   \left[ h^2(t, y) -\frac{C(V)}{\alpha_V} \|z_0\|_{L^\infty}|h(t, y) |  \right] dy\\
   & \stackrel{\eqref{e:binomio},\eqref{e:integrbyparts}}{\ge} 
    - \frac{C(V) \|z_0\|^2_{L^\infty}}{\alpha_V},
\end{split}
\end{equation*}
where we recall that the exact value of the constants can vary from line to line. 
We use analogous computations to control the second and third line of~\eqref{e:estTAfirst} and eventually arrive at

\be \label{e:TA}
\begin{split}
T_A \ge &~  -\frac{C(V) \|z_0\|^2_{L^\infty}}{\alpha_V} - \frac{54\beta_V^2 m(t)^2}{\alpha_V}+ 
    \frac{\alpha_V}{2} \frac{1}{\ee} \int_x^{+ \infty}  \exp \left( \frac{x-y}{\ee} \right)  |h(t, y)|^2 dy. 
\end{split}
\eq
{\bf Step 2} (estimate of $T_B$).
First, we point out that the term $T_B$ has the following structure:
\[
T_B = g \ast \left(\frac1\ee \delta_0 - \frac1\ee \eta_\ee \right)= g\ast (-\eta_\ee') =-\partial_x g\ast \eta_\ee = -\frac1\ee\int_x^{+\infty}\exp \left( \frac{x-y}{\ee} \right) \partial_y g(t, y) dy,
\]
provided $g= \partial_2V \partial_xu \  \xi$.
Since
\[
\partial_y g = \partial^2_{22}V [\partial_y u]^2 \xi + \partial^2_{12}V h \partial_xu \ \xi + \partial_2V \partial^2_{yy}u \ \xi + \partial_2 V \partial_y u h
\]
then
\[
\begin{split}
T_B = &~ -\frac1\ee\int_x^{+\infty}\exp \left( \frac{x-y}{\ee} \right) \left[\partial^2_{22}V [\partial_y u]^2 \xi + \partial^2_{12}V h \partial_xu \ \xi + \partial_2V \partial^2_{yy}u \ \xi + \partial_2 V \partial_y u h \right](t, y) dy \\
\doteq &~T_{B1}+ T_{B2} + T_{B3} + T_{B4}.
\end{split}
\]
Owing to~\eqref{e:mpii} and ~\eqref{e:magic} we have $\partial_x u = \rho z = (\xi - \ee h) z$. Owing to~\eqref{e:mpxi} and again to~\eqref{e:mpii} this implies 
\begin{equation*}\label{e:TB134}
\begin{split}
|T_{B1}| &\le C(V) \|z_0\|_{L^\infty}^2 + 2\ee  C(V)  \|z_0\|^2_{L^\infty} \frac1\ee \int_x^{+\infty}\! \! \! \exp \left( \frac{x-y}{\ee} \right)  |h (t, y)| dy \\
& + \ee^2  C(V)  \|z_0\|^2_{L^\infty} \frac1\ee \int_x^{+\infty} \! \! \! \exp \left( \frac{x-y}{\ee} \right)  |h (t, y)|^2 dy
\end{split}
\end{equation*}
and 
\[
|T_{B2}|+|T_{B4}| \le 2 C(V)  \|z_0 \|_{L^\infty} \frac1\ee \int_x^{+\infty} \! \! \! \! \exp \left( \frac{x-y}{\ee} \right)  |h (t, y)| dy + 
2 \ee  C(V)  \|z_0\|_{L^\infty} \frac1\ee \int_x^{+\infty} \! \! \!  \!\exp \left( \frac{x-y}{\ee} \right)  |h (t, y)|^2 dy.
\]
We are left to control $T_{B3}$: we use~\eqref{e:mpii} and~\eqref{e:est_h} and point out that  
since $\partial_xu=\rho z$ then $\partial^2_{xx}u= \partial_x \rho z + \rho \partial_x z =  \partial_x \rho z + \rho^2 \psi $.  We then decompose $T_{B3}$ as 
\[
T_{B3}= -\frac1\ee \int_x^{+\infty}\exp \left( \frac{x-y}{\ee} \right) \partial_2 V  \left(\partial_x \rho z + \rho^2 \psi\right)\xi dy \doteq T_{B31}+T_{B32}.
\]
We first control $|T_{B32}|$ and by using~\eqref{e:mpxi},\eqref{e:est_h} and~\eqref{e:magic} we arrive at
\[
\begin{split}
|T_{B32}| \le &~ C(V)\|\psi_0\|_{L^\infty} + 2 \ee C(V)\|\psi_0\|_{L^\infty}  \frac1\ee \int_x^{+\infty}\exp \left( \frac{x-y}{\ee} \right) |h(t, y)| dy\\
&~ + \ee^2 C(V)\|\psi_0\|_{L^\infty}  \frac1\ee \int_x^{+\infty}\exp \left( \frac{x-y}{\ee} \right) |h(t, y)|^2 dy.
\end{split}
\]
We now focus on $T_{B31}$: by differentiating~\eqref{e:magic} we get $\partial_x \rho = h - \ee \partial_x h$, which yields 
\[
T_{B31} = -\frac1\ee \int_x^{+\infty}\exp \left( \frac{x-y}{\ee} \right)  \partial_2 V  \left( z h \xi - z \ee \xi \partial_x h\right) dy \doteq T_{B311}+ T_{B312}.
\]
Owing to~\eqref{e:mpxi},\eqref{e:est_h} we get 
\[
|T_{B311}| \le C(V) \|z_0\|_{L^\infty}  \int_x^{+\infty}\frac1\ee \exp \left( \frac{x-y}{\ee} \right) |h(t, y)| dy
\]
To control $T_{B312}$ we use the Integration by Parts Formula and arrive at 
\[
T_{B312}= -  [\partial_2 V z \,  \xi \,  h ](t, x) + \int_x^{+\infty} \partial_y \left[\exp \left( \frac{x-y}{\ee} \right)  \partial_2 V z \, \xi\right] h  dy \doteq - \partial_2 V (t,x) z(t,x)\xi(t,x)m(t)  + T_C.
\]
We have 
\[
\begin{split}
T_C = ~ \int_x^{+\infty} & \! \! \left[  -\frac1\ee \exp \left( \frac{x-y}{\ee} \right)  \partial_2V z \, \xi h + \exp \left( \frac{x-y}{\ee} \right)  \partial_y[\partial_2 V] z \xi h \right. \\
& + \left. \exp \left( \frac{x-y}{\ee} \right)  \partial_2 V \partial_x z \xi h + \exp \left( \frac{x-y}{\ee} \right) \partial_2 V z h^2\right] dy \\
\doteq & ~ T_{C1} + T_{C2} + T_{C3} + T_{C4}. \phantom{\int}
\end{split}
\] 
Owing to~\eqref{e:mpxi},\eqref{e:est_h} we have 
\[
\begin{split}
|T_{C1}| \le& ~  C(V) \|z_0 \|_{L^\infty} \int_x^{+\infty} \frac1\ee \exp \left( \frac{x-y}{\ee} \right)  |h(t, y)| dy, \\
|T_{C4}| \le& ~  \ee C(V) \|z_0\|_{L^\infty} \int_x^{+\infty} \frac1\ee \exp \left( \frac{x-y}{\ee} \right)  |h(t, y)|^2 dy.
\end{split}
\]
By using again the identity $\partial_x z = \rho \psi = (\xi - \ee h) \psi$, we get 
\[
|T_{C3}| \le \ee C(V)\|\psi_0\|_{L^\infty} \int_x^{+\infty} \frac1\ee \exp \left( \frac{x-y}{\ee} \right)  |h(t, y)| dy + \ee^2 C(V)\|\psi_0\|_{L^\infty} \int_x^{+\infty} \frac1\ee \exp \left( \frac{x-y}{\ee} \right)  |h(t, y)|^2 dy.
\] 
We also have 
\[
T_{C2}= \int_x^{+\infty} \exp \left( \frac{x-y}{\ee} \right)  \left[(\partial^2_{22} V)\partial_x u z \xi h +  (\partial^2_{21} V) z \xi h^2\right] dy \doteq T_{C21}+ T_{C22}
\]
and, since $\partial_x u = \rho z = (\xi - \ee h)z$ and by recalling~\eqref{e:mpxi} and~\eqref{e:mpii}, this implies 
\[
\begin{split}
|T_{C21}| \le &~ C(V) \|z_0 \|_{L^\infty}^2 \ee \int_x^{+\infty}\frac1\ee \exp \left( \frac{x-y}{\ee} \right) |h(t, y)| dy +  C(V) \|z_0\|_{L^\infty}^2 \ee^2 \int_x^{+\infty}\frac1\ee \exp \left( \frac{x-y}{\ee} \right) |h(t, y)|^2 dy,  \\
|T_{C22}|\le &~  C(V) \|z_0\|_{L^\infty} \ee \int_x^{+\infty}\frac1\ee \exp \left( \frac{x-y}{\ee} \right) |h(t, y)|^2 dy.
\end{split}
\]
We combine all the above estimates and recall that 
 $m(t)<0$ to arrive at 
\be \label{e:TB}
\begin{split}
|T_B| \le &~ |T_{B1}| + |T_{B2}| + |T_{B4}| + |T_{B3}| \\
\le &~ |T_{B1}| + |T_{B2}| + |T_{B4}| + |T_{B32}| + |T_{B311}| + |\partial_2 V| \|z_0 \|_{L^\infty} |m(t)| +
|T_{C1}| + |T_{C3}|+|T_{C4}| \\ & + |T_{C21}|+|T_{C22}|  \leq~ -\tilde c m(t)  + \tilde c_0 + \tilde c_1(|h|\ast \eta_\ee)(x) +  \ee  \tilde c_2(h^2\ast \eta_\ee)(x),
%
\end{split}
\eq
where we have used the inequality $\ee \leq 1$, the shorthand notation 
\be
|h|\ast \eta_\ee(x) = \int_x^{+\infty}\frac1\ee \exp \left( \frac{x-y}{\ee} \right) |h(t, y)| dy, \quad h^2 \ast \eta_\ee(x) = \int_x^{+\infty}\frac1\ee \exp \left( \frac{x-y}{\ee} \right) |h(t, y)|^2 dy  
 \eq
 and 
\[
\begin{split}
& \tilde c =  C(V)\|z_0\|_{L^\infty}, \quad \tilde c_0 = C(V) \left[ \|z_0\|_{L^\infty}^2 + \|\psi_0\|_{L^\infty} \right], \quad\tilde c_1 = C(V) \left[ \|z_0\|_{L^\infty} + \|z_0\|_{L^\infty}^2 +  \|\psi_0\|_{L^\infty}\right] \\
& \tilde c_2 =    C(V)  \left[ \|z_0\|_{L^\infty} +  \|z_0\|_{L^\infty}^2 +  \|\psi_0\|_{L^\infty} \right],
\end{split}
\]
and the fact that the exact value of $C(V)$ can vary from occurrence to occurrence. \\
{\bf Step 3} (conclusion).
We now plug \eqref{e:TA} and \eqref{e:TB} into \eqref{e:deracca}, use~\eqref{e:magic} and~\eqref{e:mpii} and conclude that at any point of differentiability of $m$ where $m<0$ we have 
\[
\begin{split}
\frac{dm}{dt} &    =  - \partial_1   V(t, x) m^2 (t) -  \partial_2 V  (t, x) \rho(t, x) m(t) z(t, x) +T_A + T_B \\
    \ge &~ \left[ \alpha_V - \frac{54 \beta_V^2}{\alpha_V} - \ee \|z_0\|_{L^\infty}C(V) \right] m^2(t) + \left[ C(V) \|z_0\|_{L^\infty} + \tilde c \right] m(t) - 
       \left[ \tilde c_0 + \frac{C(V) \| z_0 \|^2_{L^\infty}}{\alpha_V}\right] \\
    & ~  + \left[ \frac{\alpha_V}2 - \ee \tilde c_2 \right] (h^2 \ast \eta_\ee)(x) - \tilde c_1 (|h|\ast \eta_\ee)(x)
\end{split}
\]
We point out that for  $\ee$ sufficiently small we have $\sfrac{\alpha_V}{2} - \ee \tilde c_2>\sfrac{\alpha_V}{4}$, we use~\eqref{e:binomio} to conclude 
\[
\left[ \frac{\alpha_V}2 - \ee \tilde c_2 \right] (|h|^2 \ast \eta_\ee)(x) - \tilde c_1 (|h|\ast \eta_\ee)(x) \ge - \frac{\tilde c_1^2}{2\alpha_V}.
\]
This eventually implies that $m$ satisfies~\eqref{e:goalolenik} 
provided $\ee$ is small enough and 
\[
\begin{split}
c_2 = &~ \alpha_V - \frac{54 \beta_V^2}{\alpha_V} -\ee \|z_0\|_{L^\infty}C(V)   \qquad 
c_1 =  C(V)\|z_0\|_{L^\infty} + \tilde c \\
c_0 = &~ \tilde c_0 + \frac{C(V) \| z_0 \|^2_{L^\infty}}{\alpha_V} + \frac{\tilde c_1^2}{2\alpha_V}.  
\end{split}
\]
Note that, under~\eqref{e:V3}, $c_2 >0$ for $\ee$ small enough. This concludes the proof of Proposition~\ref{p:oleinik}. 
\end{proof}
\begin{remark}
As pointed out at the beginning of the proof of Proposition~\ref{p:oleinik}, the differential inequality~\eqref{e:goalolenik} implies $m'(t) \ge c m^2(t)$ for a suitable constant $c>0$ as soon as $m (t) <-a$ for some positive constant $a>0$. 
If the initial datum $\rho_0$ satisfies the  one-sided Lipschitz condition
$$
    \rho_0(y)-\rho_0(x) \ge m_0 [y-x]\qquad \text{for every $ x< y$ and for some $m_0<0$}
$$
then we have 
\[
\xi(t,y)-\xi(t,x) \ge {\color{black} \left[-a + \frac1{\sfrac1{m_0}-c t}\right] }(y-x) \qquad \text{for every $ x< y$ and $t>0$}.
\]
{\color{black}The above estimate is better than \eqref{e:onesidexi}, since the one-side Lipschitz bound does not blow-up as $t\to 0$.}
\end{remark}

\begin{proof}[Proof of Theorem \ref{t:oleinik}] We proceed according to the following steps. \\
{\bf Step 1} (compactness of $\{ \xi_\ee \}_{\ee >0}$ and $\{ \rho_\ee \}_{\ee >0}$). By combining \eqref{e:mpxi} and \eqref{e:onesidexi} we conclude that for every $t,L>0$ we have 
\be \label{e:tvxi}
\mathrm{Tot.Var.} \, \{ \xi_\ee(t,\cdot); [-L,L]\} \le 4L \left( \alpha_0 + \frac1{ct}\right) + 1
\eq
and that the distribution $h_\ee\doteq \partial_x \xi_\ee$ satisfies 
\be \label{e:onesidedLip}
h_\ee\doteq \partial_x \xi_\ee \ge -\left(\alpha_0 + \frac1{ct} \right) \quad \text{for every $t>0$}
\eq
and every $\ee$ small enough, which in particular implies that $h_\ee$ is actually a measure. We also recall~\eqref{e:boundrho} and conclude that, for every vanishing sequence $\ee_k\to 0^+$ there are a subsequence (which we do not relabel) and a bounded function $\bar \rho$ such that $\rho_{\ee_k}\rightharpoonup^\ast \bar \rho$ weakly$^\ast$ in $L^\infty(\R^+\times \R)$. Note that actually a slightly stronger result holds:  recalling Remark~\ref{r:dafermos}, we have $ \rho_{\ee_k} (t, \cdot) \rightharpoonup^\ast \rho(t, \cdot)$ for \emph{every} $t\ge 0$. This follows from very the same argument as in {\sc Step 2} of the proof of Theorem 1.1, item (i) in~\cite{ColomboCrippaSpinolo2} and implies that 
\begin{equation}\label{e:Linfglobal2}
          \xi_{\ee_k} (t, \cdot) = \rho_{\ee_k} (t, \cdot) \ast \eta_{\ee_k} \weaks \rho (t, \cdot)  \quad \text{in the space of distributions $\mathcal D'(\R)$.}
 \end{equation}
We now consider the sequence $\{ \xi_{\ee_k} \}_{k \in \mathbb N}$: by combining~\eqref{e:mpxi} with~\eqref{e:tvxi} and the Helly-Frech\'et-Kolmogorov Compactness Theorem we conclude that, for every $t>0$, the sequence $\{ \xi_{\ee_k} (t, \cdot) \}_{k \in \mathbb N}$ converges (in principle up to subsequences) in the strong topology of $L^1_{\mathrm{loc}} (\R)$  to some limit function $\tilde \rho$. Owing to~\eqref{e:Linfglobal2}, we have $\tilde \rho = \bar \rho (t, \cdot)$ and by the  Urysohn Subsequence Principle we conclude that 
\begin{equation}\label{e:Linfglobal3}
          \xi_{\ee_k} (t, \cdot)\to \bar  \rho (t, \cdot)  \quad \text{strongly in  $L^1_{\mathrm{loc}} (\R)$, for every $t>0$.}
 \end{equation}
\\
{\bf Step 2} (compactness of $\{ u_\ee \}_{\ee>0})$. Since $\partial_x u_\ee=\rho_\ee z_\ee$, by combining \eqref{e:mpii} and \eqref{e:boundrho} we obtain that $\|\partial_x u_\ee\|_{L^\infty (\R^+\times \R}~\le~2 \|z_0\|_{L^\infty(\R^+\times \R)}$. Owing to~\eqref{e:nlGARZ}, this implies that the functions $\rho_\ee$ are equi-Lipschitz functions in $\R^+\times \R$. By the Arzel\`a Ascoli Theorem, up to taking a further subsequence,  we have 
\be \label{e:conveu}
     u_{\ee_k} \to \bar u \;  \text{uniformly in $C^0_\loc(\R^+\times \R)$}, \qquad \partial_t u_{\ee_k}\weaks \partial_t u, \;  \partial_x u_{\ee_k}\weaks  \partial_x \bar u \; \text{weakly$^\ast$ in $L^\infty(\R_+ \times \R)$}. 
     \eq 
By combining~\eqref{e:Linfglobal3} and~\eqref{e:conveu} we can pass to the limit in~\eqref {e:nlGARZee} and conclude that $(\bar \rho, \bar u)$ satisfies the equation at the first line of~\eqref{e:GARZ} in the sense of distributions, and that the equation at the second lines holds true as an identity between $L^\infty$ functions. \\
{\bf Step 3} (entropy admissibility of the limit). We are left to show that $\bar \rho$ is an entropy admissible solution of the conservation laws at the first line of~\eqref{e:GARZ}. Owing to the uniqueness result in~\cite[Theorem 1.1]{MS:localGARZ}, this implies that the whole family $(\rho_\ee, u_\ee)$ converges to $(\bar \rho, \bar u)$ and concludes the proof. 

By combining~\eqref{e:tvxi} with~\eqref{e:Linfglobal2} and using the lower semicontinuity of the total variation with respect to the $L^1$ convergence we obtain that  $\bar \rho \in L^\infty_{\loc}(]0,+\infty[ \, ; \mathrm{BV}_{\loc}(\R))$. Since $\rho$ is distributional solution of \eqref{e:GARZ}, we deduce that $\bar \rho \in \mathrm{BV}_{\loc}(]0,+\infty[\times \R).$ 
Since $\bar u$ is a  Lipschitz continuous function, for every $k \in \R$ the distribution 
\[
\mu_k :=    \partial_t |\bar \rho- k| + \partial_x \Big[\mathrm{sign}[\bar \rho-k] \big[V(\bar \rho, \bar u) \rho - V(k, \bar u) k \big]\Big] +
    \mathrm{sign}[\bar \rho-k] k \partial_2 V (k, u) \partial_x \bar u
\]
is a locally finite Borel measure and by the Volpert chain-rule for bounded variation functions~\cite[\S3.10]{AmbrosioFuscoPallara}, we deduce that $\mu_k$ is concentrated on the jump set $J$ of $\rho$.  More precisely
\[
\mu_k = \left[ \frac{\psi_k(\rho^+, \bar u)-\psi_k(\rho^-,\bar u)}{\rho^+-\rho^-} -\lambda (|\rho^+-k| - |\rho^--k|)\right]\frac1{\sqrt{1+\lambda^2}} \mathcal H^1 \llcorner J,
\]
where $\rho^\pm$ denotes the left and right traces, respectively, of $\rho$ at $J$, 
\[
\lambda = \frac{V(\rho^+,u)\rho^+-V(\rho^-,u)\rho^-}{\rho^+-\rho^-}
\]
is the speed of propagation of the shock dictated by the Rankine-Hugoniot conditions and
\[
\psi_k(\rho,u)= \mathrm{sign}[\rho-k] \big[V(\rho, u) \rho - V(k, u) k \big].
\]
By using~\eqref{e:Linfglobal2} and passing to the limit in the inequality~\eqref{e:onesidedLip} we conclude that $\bar \rho$ satisfies a one-sided Lipschitz condition, which in turn implies 
$\rho^- \le \rho^+$.  Since for every $u \in \R$ the map $\rho\mapsto V(\rho,u)\rho$ is concave, we conclude that 
the density of $\mu_k$ with respect to $\mathcal H^1$ is non-positive. 
This establish the entropy admissibility of $\rho$ and concludes the proof of Theorem~\ref{t:oleinik}.
\end{proof}
\section*{Acknowledgments}
LVS wishes to thank Ganesh Vaidya for drawing her attention to reference~\cite{CFGG}. 
Both authors are members of the GNAMPA group of INDAM and are supported by the project PRIN 2022 PNRR C53D23008420001 (PI Roberta Bianchini). LVS is also supported by PRIN 2022YXWSLR (PI Paolo Antonelli) and by the CNR project STRIVE (DIT.AD022.207 - STRIVE (FOE 2022)). Both PRIN projects are financed by the European Union-Next Generation EU. 
\bibliographystyle{plain}
\bibliography{garz}

\begin{thebibliography}{10}

\bibitem{ACCGK}
D.~Amadori, F.~A. Chiarello, G.~Cipollone, X.~Gong, and A.~Keimer.
\newblock {\em In preparation}.

\bibitem{AmbrosioFuscoPallara}
L.~Ambrosio, N.~Fusco, and D.~Pallara.
\newblock {\em Functions of bounded variation and free discontinuity problems}.
\newblock Oxford Mathematical Monographs. The Clarendon Press, Oxford
  University Press, New York, 2000.

\bibitem{AndreianovSylla}
B.~Andreianov and A.~Sylla.
\newblock Existence analysis and numerical approximation for a second-order
  model of traffic with orderliness marker.
\newblock {\em Math. Models Methods Appl. Sci.}, 32(7):1295--1348, 2022.

\bibitem{AwRascle}
A.~Aw and M.~Rascle.
\newblock Resurrection of ``second order'' models of traffic flow.
\newblock {\em SIAM J. Appl. Math.}, 60(3):916--938, 2000.

\bibitem{BlandinGoatin}
S.~Blandin and P.~Goatin.
\newblock Well-posedness of a conservation law with non-local flux arising in
  traffic flow modeling.
\newblock {\em Numer. Math.}, 132(2):217--241, 2016.

\bibitem{BressanShen}
A.~Bressan and W.~Shen.
\newblock On traffic flow with nonlocal flux: a relaxation representation.
\newblock {\em Arch. Ration. Mech. Anal.}, 237(3):1213--1236, 2020.

\bibitem{BressanShen2}
A.~Bressan and W.~Shen.
\newblock Entropy admissibility of the limit solution for a nonlocal model of
  traffic flow.
\newblock {\em Commun. Math. Sci.}, 19(5):1447--1450, 2021.

\bibitem{CPSZ}
N.~Chaudhuri, J.~Peszek, M.~Szlenk, and E.~Zatorska.
\newblock Non-local dissipative {Aw-Rascle} model and its relation with
  matrix-valued communication in {Euler} alignment.
\newblock {\em Arxiv:2409.07593.}

\bibitem{Chiarello}
F.~A. Chiarello.
\newblock An overview of non-local traffic flow models.
\newblock In {\em Mathematical descriptions of traffic flow: micro, macro and
  kinetic models. Selected papers based on the presentations of the
  mini-symposium at ICIAM 2019, Valencia, Spain, July 2019}, pages 79--91.
  Cham: Springer, 2021.

\bibitem{CFGG}
F.~A. Chiarello, J.~Friedrich, P.~Goatin, and S.~G{\"o}ttlich.
\newblock Micro-macro limit of a nonlocal generalized {Aw}-{Rascle} type model.
\newblock {\em SIAM J. Appl. Math.}, 80(4):1841--1861, 2020.

\bibitem{ChiarelloKeimer}
F.~A. Chiarello and A.~Keimer.
\newblock On the singular limit problem in nonlocal balance laws: applications
  to nonlocal lane-changing traffic flow models.
\newblock {\em J. Math. Anal. Appl.}, 537(2):24, 2024.

\bibitem{8autori}
G.~M. Coclite, M.~Colombo, G.~Crippa, N.~De~Nitti, A.~Keimer, E.~Marconi,
  L.~Pflug, and L.~V. Spinolo.
\newblock Oleinik-type estimates for nonlocal conservation laws and
  applications to the nonlocal-to-local limit.
\newblock {\em J. Hyperbolic Differ. Equ.}, 21(3):681--705, 2024.

\bibitem{CocliteCoronDNKeimerPflug}
G.~M. Coclite, J.-M. Coron, N.~De~Nitti, A.~Keimer, and L.~Pflug.
\newblock A general result on the approximation of local conservation laws by
  nonlocal conservation laws: The singular limit problem for exponential
  kernels.
\newblock {\em Ann. Inst. H. Poincar\'{e} C Anal. Non Lin\'{e}aire}, 2022.
\newblock Published online first.

\bibitem{CDN}
G.~M. Coclite and N.~De~Nitti.
\newblock On a nonlocal regularization of a non-strictly hyperbolic system of
  conservation laws.
\newblock {\em Preprint available on https://cvgmt.sns.it/paper/6554/}.

\bibitem{ColomboCrippaMarconiSpinolo}
M.~Colombo, G.~Crippa, E.~Marconi, and L.~V. Spinolo.
\newblock Local limit of nonlocal traffic models: convergence results and total
  variation blow-up.
\newblock {\em Ann. Inst. H. Poincar\'{e} C Anal. Non Lin\'{e}aire},
  38(5):1653--1666, 2021.

\bibitem{ColomboCrippaSpinolo2}
M.~Colombo, G.~Crippa, and L.~V. Spinolo.
\newblock On multidimensional nonlocal conservation laws with {BV} kernels.
\newblock {\em Indiana Univ. Math. J., to appear. Also Arxiv:2408.02423}.

\bibitem{ColomboCrippaSpinolo}
M.~Colombo, G.~Crippa, and L.~V. Spinolo.
\newblock On the singular local limit for conservation laws with nonlocal
  fluxes.
\newblock {\em Arch. Rat. Mech. Anal.}, 233(3):1131--1167, 2019.

\bibitem{CCMS}
Maria Colombo, Gianluca Crippa, Elio Marconi, and Laura~V. Spinolo.
\newblock Nonlocal traffic models with general kernels: singular limit, entropy
  admissibility, and convergence rate.
\newblock {\em Arch. Ration. Mech. Anal.}, 247(2):32, 2023.
\newblock Id/No 18.

\bibitem{CLM}
G.~Crippa and M.~L{\'e}cureux-Mercier.
\newblock Existence and uniqueness of measure solutions for a system of
  continuity equations with non-local flow.
\newblock {\em NoDEA Nonlinear Differential Equations Appl.}, 20(3):523--537,
  2013.

\bibitem{Dafermos:book}
C.~M. Dafermos.
\newblock {\em Hyperbolic conservation laws in continuum physics}, volume 325
  of {\em Grundlehren der Mathematischen Wissenschaften [Fundamental Principles
  of Mathematical Sciences]}.
\newblock Springer-Verlag, Berlin, fourth edition, 2016.

\bibitem{Daganzo}
C.~F. Daganzo.
\newblock Requiem for second-order fluid approximations of traffic flow.
\newblock {\em Transportation Research Part B: Methodological}, 29(4):277--286,
  1995.

\bibitem{FanHertySeibold}
S.~Fan, M.~Herty, and B.~Seibold.
\newblock Comparative model accuracy of a data-fitted generalized
  {Aw}-{Rascle}-{Zhang} model.
\newblock {\em Netw. Heterog. Media}, 9(2):239--268, 2014.

\bibitem{GaravelloHanPiccoli}
M.~Garavello, K.~Han, and B.~Piccoli.
\newblock {\em Models for Vehicular Traffic on Networks}.
\newblock American Institute of Mathematical Sciences, 2016.

\bibitem{KeimerPflug}
A.~Keimer and L.~Pflug.
\newblock Existence, uniqueness and regularity results on nonlocal balance
  laws.
\newblock {\em J. Differential Equations}, 263(7):4023--4069, 2017.

\bibitem{KeimerPflug2}
A.~Keimer and L.~Pflug.
\newblock On approximation of local conservation laws by nonlocal conservation
  laws.
\newblock {\em J. Math. Anal. Appl.}, 475(2):1927--1955, 2019.

\bibitem{LW}
M.~Lighthill and G.~Whitham.
\newblock On kinematic waves. {II. A} theory of traffic flow on long crowded
  roads.
\newblock {\em Proceedings of the Royal Society of London: Series A.},
  229:317--345, 1955.

\bibitem{MS:localGARZ}
E.~Marconi and L.V. Spinolo.
\newblock Well-posedness results for the {Generalized Aw-Rascle-Zhang} model.
\newblock {\em Preprint.}, 2025.

\bibitem{Oleinik}
O.~A. Oleinik.
\newblock Discontinuous solutions of non-linear differential equations.
\newblock {\em Amer. Math. Soc. Transl. (2)}, 26:95--172, 1963.

\bibitem{Payne}
H.J. Payne.
\newblock Models of freeway traffic and control.
\newblock {\em Math. Models Publ. Sys., Simulation Council Proc.},
  1(28):51--61, 1971.

\bibitem{R}
P.~I. Richards.
\newblock Shock waves on the highway.
\newblock {\em Operations Res.}, 4:42--51, 1956.

\bibitem{Whitham}
G.~B. Whitham.
\newblock {\em Linear and nonlinear waves.}
\newblock Pure Appl. Math., Wiley-Intersci. Ser. Texts Monogr. Tracts. New
  York, NY: Wiley, paperback ed. edition, 1999.

\bibitem{Zhang}
H.M. Zhang.
\newblock A non-equilibrium traffic model devoid of gas-like behavior.
\newblock {\em Transportation Research Part B: Methodological}, 36(3):275--290,
  2002.

\end{thebibliography}
\end{document}